\numberwithin{equation}{section}
\newtheorem{theorem}{Theorem}[section]
\newtheorem{proposition}[theorem]{Proposition}
\newtheorem{corollary}[theorem]{Corollary}
\newtheorem{lemma}[theorem]{Lemma}
\theoremstyle{definition}
\newtheorem{remark}[theorem]{Remark}
\newtheorem{definition}[theorem]{Definition}
\def\ZZ{\mathbb{Z}}
\def\Acal{\mathcal{A}}
\def\Fcal{\mathcal{F}}
\def\Tcal{\mathcal{T}}
\def\Hom{\text{Hom}}
\def\Ext{\text{Ext}}
\renewcommand{\eqref}[1]{{\rm (\ref{#1})}}
\begin{document}

\title[The multiplication theorem and bases in quantum
cluster algebras] {The multiplication theorem and bases in finite
and affine quantum cluster algebras}

\author{Ming Ding and Fan Xu}
\address{Institute for advanced study\\
Tsinghua University\\
Beijing 100084, P.~R.~China} \email{m-ding04@mails.tsinghua.edu.cn
(M.Ding)}
\address{Department of Mathematical Sciences\\
Tsinghua University\\
Beijing 100084, P.~R.~China} \email{fanxu@mail.tsinghua.edu.cn
(F.Xu)}


\subjclass[2000]{Primary  16G20, 16G70; Secondary  14M99, 18E30}
\thanks{The research was
supported  by NSF of China (No. 11071133)}

\keywords{cluster variable, quantum cluster algebra.}

\maketitle

\begin{abstract} We prove a multiplication
theorem for quantum cluster algebras of acyclic quivers. The theorem
generalizes the multiplication formula for quantum cluster variables
in \cite{fanqin}. Moreover some $\mathbb{ZP}$-bases in quantum
cluster algebras of finite and affine types are constructed.  Under
the specialization $q$ and coefficients to $1$, these bases are the
integral bases of cluster algebra of finite and affine types (see
\cite{CK1} and \cite{DXX}).
\end{abstract}

\section{Introduction}
Quantum cluster algebras  were introduced by A.~Berenstein and
A.~Zelevinsky \cite{berzel} as a noncommutative analogue of cluster
algebras  \cite{ca1}\cite{ca2} to  study  canonical bases.  A
quantum cluster algebra   is generated by a  set of generators
called the \textit{quantum cluster variables} inside an ambient
skew-field $\mathcal{F}$. Under the specialization $q=1,$ the
quantum cluster algebras are exactly cluster algebras which were
introduced by S.~Fomin and A.~Zelevinsky \cite{ca1}\cite{ca2}.

Cluster algebras have a close link to quiver representations via
cluster categories invented in \cite{BMRRT}.  The link is explicitly
characterized by the Caldero-Chapoton map (\cite{caldchap}) and the
Caldero-Keller multiplication theorems (\cite{CK1},\cite{CK2}). The
Caldero-Chapoton map associates the objects in the cluster
categories to some Laurent polynomials, in particular, sends rigid
objects to cluster variables. The Caldero-Keller multiplication
theorems show the multiplication rules between images of objects
under the Caldero-Chapoton map. The theorem is remarkable. On the
one hand, it is similar to the multiplication in a dual Hall algebra
and unifies homological and geometric properties of cluster
categories and combinatorial properties of cluster algebras. On the
other hand, since cluster algebras were introduced to study
canonical bases, it is important to construct integral bases of
cluster algebras. The Caldero-Keller  multiplication theorems are
essentially important to construct integral bases of cluster
algebras. Following this link, some good bases have been constructed
for finite and affine cluster algebras (\cite{CK1}, \cite{calzel},
\cite{Dup} and \cite{DXX}).

Naturally, one can study the quantum analogue of the link. Recently,
Rupel (\cite{rupel}) defined a quantum analog of the
Caldero-Chapoton map (called the quantum Caldero-Chapoton map) and
conjectured that cluster variables could be expressed as images of
indecomposable rigid objects under the quantum Caldero-Chapoton
formula. A key ingredient of the conjecture is to confirm the
multiplication rules between quantum cluster variables given by
\cite{berzel}. Most recently, the conjecture has been proved by Qin
(\cite{fanqin}) for acyclic equally valued quivers. There Qin
constructed a quantum cluster multiplication formula and then
confirmed the multiplication rules between quantum cluster
variables.

The present paper is contributed to  prove a multiplication theorem
(a combination of Theorem \ref{multi-formula} and \ref{exchange2})
for acyclic quantum cluster algebras in Section 3. The theorem
generalizes the quantum cluster multiplication formula in
\cite{fanqin} and can be viewed as a quantum analogue of the
$1$-dimensional Caldero-Keller multiplication theorem in \cite{CK2}.
Compared to the role which the Caldero-Keller multiplication
theorems play for cluster algebras, our multiplication theorem is
worthy of highlighting and also reflects the information and the
difficulty to prove the more general quantum analog of the
Caldero-Keller multiplication theorems. The main idea in the proof
of the multiplication theorem is taken from \cite{Hubery}. Moreover,
we construct some good $\mathbb{ZP}$-bases in quantum cluster
algebras of finite and affine types.  By specializing $q$ and
coefficients to $1$, these bases induce the  good bases for cluster
algebras of finite\cite{CK1} and affine types\cite{DXX},
respectively.

\section{The quantum Caldero-Chapoton map}
\subsection{Quantum cluster algebras} The main reference for quantum cluster algebras is \cite{berzel}.
Here, we also recommend \cite[Section2]{fanqin} as a nice reference. Let $L$ be a lattice of rank $m$ and
$\Lambda:L\times L\to \ZZ$ a skew-symmetric bilinear form. Let $q$
be a formal variable and consider the ring of integer Laurent
polynomials $\ZZ[q^{\pm1/2}]$.  Define the \textit{based quantum
torus} associated to the pair $(L,\Lambda)$ to be the
$\ZZ[q^{\pm1/2}]$-algebra $\mathcal{T}$ with a distinguished
$\ZZ[q^{\pm1/2}]$-basis $\{X^e: e\in L\}$ and the  multiplication
given by
\[X^eX^f=q^{\Lambda(e,f)/2}X^{e+f}.\]
It is easy to see  that $\Tcal$ is associative and the basis
elements satisfy the following relations:
\[X^eX^f=q^{\Lambda(e,f)}X^fX^e,\ X^0=1,\ (X^e)^{-1}=X^{-e}.\]  It is known that $\Tcal$ is an Ore domain, i.e.,   is contained in its
skew-field of fractions $\Fcal$.  The quantum cluster algebra
 will be defined as a
$\ZZ[q^{\pm1/2}]$-subalgebra of $\Fcal$.

A \textit{toric frame} in $\Fcal$ is a map $M: \ZZ^m\to \Fcal
\setminus \{0\}$ of the form \[M({\bf c})=\varphi(X^{\eta({\bf
c})})\] where $\varphi$ is an automorphism of $\Fcal$ and $\eta:
\ZZ^m\to L$ is an  isomorphism of lattices.  By the definition, the
elements $M({\bf c})$ form a $\ZZ[q^{\pm1/2}]$-basis of the based
quantum torus $\Tcal_M:=\varphi(\Tcal)$ and satisfy the following
relations:
\[M({\bf c})M({\bf d})=q^{\Lambda_M({\bf c},{\bf d})/2}M({\bf c}+{\bf d}),\
M({\bf c})M({\bf d})=q^{\Lambda_M({\bf c},{\bf d})}M({\bf d})M({\bf
c}),\]
\[ M({\bf 0})=1,\ M({\bf c})^{-1}=M(-{\bf c}),\]
where $\Lambda_M$ is the skew-symmetric bilinear form on $\ZZ^m$
obtained from the lattice isomorphism $\eta$.  Let $\Lambda_M$ also
denote the skew-symmetric $m\times m$ matrix defined by
$\lambda_{ij}=\Lambda_M(e_i,e_j)$ where $\{e_1, \ldots, e_m\}$ is
the standard basis of $\ZZ^m$.  Given a toric frame $M$, let
$X_i=M(e_i)$.  Then we have
$$\Tcal_M=\ZZ[q^{\pm1/2}]\langle X_1^{\pm 1}, \ldots,
X_m^{\pm1}:X_iX_j=q^{\lambda_{ij}}X_jX_i\rangle.$$  An easy
computation shows that
\[M({\bf c})=q^{\frac{1}{2}\sum_{i<j}
c_ic_j\lambda_{ji}}X_1^{c_1}X_2^{c_2}\cdots X_m^{c_m}=:X^{{\bf c}} \
\ \ ({\bf c}\in\ZZ^m).\]

Let $\Lambda$ be an $m\times m$ skew-symmetric matrix and let
$\widetilde{B}$ be an $m\times n$ matrix for some positive integer
$n\leq m$. We call the pair $(\Lambda, \widetilde{B})$
\textit{compatible} if $\widetilde{B}^T\Lambda=(D|0)$ is an $n\times
m$ matrix with $D=diag(d_1,\cdots,d_n)$ where $d_i\in \mathbb{N}$
for $1\leq i\leq n$. The pair $(M,\widetilde{B})$ is called a
\textit{quantum seed} if the pair $(\Lambda_M, \widetilde{B})$ is
compatible.  Define the $m\times m$ matrix $E=(e_{ij})$ by
\[e_{ij}=\begin{cases}
\delta_{ij} & \text{if $j\ne k$;}\\
-1 & \text{if $i=j=k$;}\\
max(0,-b_{ik}) & \text{if $i\ne j = k$.}
\end{cases}
\]
For $n,k\in\ZZ$, $k\ge0$, denote ${n\brack
k}_q=\frac{(q^n-q^{-n})\cdots(q^{n-k+1}-q^{-n+k-1})}{(q^k-q^{-k})\cdots(q-q^{-1})}$.
Let ${\bf c}=(c_1,\ldots,c_m)\in\ZZ^m$ with $c_{k}\geq 0$.  Define
the toric frame $M': \ZZ^m\to \Fcal \setminus \{0\}$ as follows:
\begin{equation}\label{eq:cl_exp}M'({\bf c})=\sum^{c_k}_{p=0} {c_k \brack p}_{q^{d_k/2}} M(E{\bf c}+p{\bf b}^k),\ \ M'({\bf -c})=M'({\bf c})^{-1}.\end{equation}
where the vector ${\bf b}^k\in\ZZ^m$ is the $k-$th column of
$\widetilde{B}$.    Then the quantum seed $(M',\widetilde{B}')$ is
defined to be the mutation of $(M,\widetilde{B})$ in direction $k$.
In general, two quantum seeds $(M, \widetilde{B})$ and $(M',
\widetilde{B}')$ are mutation-equivalent if they can be obtained
from each other by a sequence of mutations, denoted by $(M,
\widetilde{B})\sim (M', \widetilde{B}')$. Let $\mathcal{C}=\{M'(e_i)
\mid (M, \widetilde{B})\sim (M', \widetilde{B}'), i=1, \cdots n\}$.
The elements of $\mathcal{C}$ are called \textit{quantum cluster
variables}. Let $\mathcal{P}=\{M(e_i): i=n+1, \cdots, m]\}$ and  the
elements of $\mathcal{P}$ are called \emph{coefficients}.  Given
$(M', \widetilde{B}')\sim (M, \widetilde{B})$ and ${\bf c}=(c_i)\in
\mathbb{Z}^m$, a element $M'(\bf c)$ is called  a \emph{quantum
cluster monomial} if $c_i\geq 0$ for $i=1, \cdots, n$ and $0$ for
$i=n+1, \cdots, m.$  We denote by $\mathbb{P}$ the multiplicative
group by $q^{\frac{1}{2}}$ and $\mathcal{P}$. Write $\mathbb{ZP}$ as
the ring of Laurent polynomials in the elements of $\mathcal{P}$
with coefficients in $\mathbb{Z}[q^{\pm 1/2}]$.  The \textit{quantum
cluster algebra} $\Acal_q(\Lambda_M,\widetilde{B})$ is the
$\mathbb{ZP}$-subalgebra of $\Fcal$ generated by $\mathcal{C}$. We
associate $(M,\tilde{B})$ a $\ZZ$-linear \emph{bar-involution} on
$\Tcal_M$ defined by
\[\overline{q^{r/2}M({\bf c})}=q^{-r/2}M({\bf c}), \ \  (r\in\ZZ,\
{\bf c}\in\ZZ^n).\]
 It is easy to show that
$\overline{XY}=\overline{Y}~\overline{X}$ for all $X,Y\in
\Acal_q(\Lambda_M, \widetilde{B})$ and that each element of
$\mathcal{C}\cup \mathcal{P}$ is \emph{bar-invariant}.

Now assume that there exists a finite field $k$ satisfying $|k|=q$.
In the same way, we can define based quantum torus
$\mathcal{T}_{|k|}$ and \emph{specialized quantum cluster algebras}
$\Acal_{|k|}(\Lambda_M,\widetilde{B})$ by substituting
$\mathbb{Z}[|k|^{\pm\frac{1}{2}}]$ for
$\mathbb{Z}[q^{\pm\frac{1}{2}}]$ in the above definition. By
\cite[Corollary 5.2]{berzel}, $\Acal_q(\Lambda_M, \widetilde{B})$
and $\Acal_{|k|}(\Lambda_M,\widetilde{B})$ are subalgebras of
$\mathcal{T}$ and $\mathcal{T}_{|k|}$, respectively. There is a
specialization map $ev: \mathcal{T}\rightarrow \mathcal{T}_{|k|}$ by
mapping $q^{\frac{1}{2}}$ to $|k|^{\frac{1}{2}}$, which induces a
bijection between quantum monomials of
$\Acal_{q}(\Lambda_M,\widetilde{B})$ and
$\Acal_{|k|}(\Lambda_M,\widetilde{B})$ (\cite[Section 2.2]{fanqin}).

\subsection{The quantum Caldero-Chapoton map}Let $k$ be a finite field with cardinality $|k|=q$ and
$m\geq n$ be two positive integers and $\widetilde{Q}$ an acyclic
quiver with vertex set $\{1,\ldots,m\}$ \cite{fanqin}. Denote the
subset $\{n+1,\dots,m\}$ by $C$. The elements in $C$ are called the
\emph{frozen vertices }, and $\widetilde{Q}$ is called an \emph{ice
quiver}. The full subquiver $Q$ on the vertices $1,\ldots,n$ is
called the \emph{principal part} of $\widetilde{Q}$.

Let $\widetilde{B}$ be the $m\times n$ matrix associated to the ice
quiver $\widetilde{Q}$, i.e., its entry in position $(i,j)$ is
\[
b_{ij}=|\{\mathrm{arrows}\, i\longrightarrow
j\}|-|\{\mathrm{arrows}\, j\longrightarrow i\}|
\]
for $1\leq i\leq m$, $1\leq j\leq n$. And let $\widetilde{I}$ be the
left $m\times n$ matrix of the identity matrix of size $m\times m$.
Further assume that there exists some antisymmetric $m\times m$
integer matrix $\Lambda$ such that
\begin{align}\label{eq:simply_laced_compatible}
\Lambda(-\widetilde{B})=\widetilde{I}:=\begin{bmatrix}I_n\\0
\end{bmatrix},
\end{align}
where $I_n$ is the identity matrix of size $n\times n$. Thus, the
matrix $\widetilde{B}$ is of full rank.

Let $\widetilde{R}$ and $\widetilde{R}^{tr}$ be the $m\times n$
matrix with its entry in position $(i,j)$ is
\[
\widetilde{r}_{ij}=\mathrm{dim}_{k}\mathrm{Ext}^{1}_{k\widetilde{Q}}(S_i,S_j)
\]
and
\[
\widetilde{r}^{*}_{ij}=\mathrm{dim}_{k}\mathrm{Ext}^{1}_{k\widetilde{Q}}(S_j,S_i)
\]
for $1\leq i\leq m$, $1\leq j\leq n$, respectively. Note that
\[
\mathrm{dim}_{k}\mathrm{Ext}^{1}_{k\widetilde{Q}}(S_i,S_j)=|\{\mathrm{arrows}\,
j\longrightarrow i\}|.
\]
 Denote the principal parts of
the matrices $\widetilde{B}$ and $\widetilde{R}$ by $B$ and $R$
respectively. Note that
$\widetilde{B}=\widetilde{R}^{tr}-\widetilde{R}$ and $B=R^{tr}-R$
where $R^{tr}$ represents the transposition of the matrix $R.$ In
general,  the matrix $B$ is not of full rank so that there exists no
matrix $\Lambda$ compatible with $B$. Hence, one need add  some
frozen vertices to $Q$ and then obtain an acyclic quiver
$\widetilde{Q}$ with a compatible pair $(\widetilde{B}, \Lambda).$

Let $\mathcal C_{\widetilde{Q}}$ be the cluster category of $k
\widetilde{Q}$, i.e., the orbit category of the derived category
$\mathcal{D}^b(\widetilde{Q})$ by the functor $F=\tau\circ[-1]$
where $\tau$ is the Auslander-Reiten translation and $[1]$ is the
translation functor. We note that the indecomposable objects of the
cluster category $\mathcal C_{\widetilde{Q}}$ are either the
indecomposable $k \widetilde{Q}$-modules or $P_i[1]$ for
indecomposable projective modules $P_i$($1\leq i \leq m$).  Each
object $M$ in $\mathcal C_{\widetilde{Q}}$ can be uniquely
decomposed in the following way:
$$M\cong M_0\oplus P_M[1]$$
where $M_0$ is a $k\widetilde{Q}$-module and $P_M$ is a projective
$k\widetilde{Q}$-module. Let $P_M=\bigoplus_{1\leq i \leq m}m_iP_i.$
We extend the definition of the dimension vector
$\mathrm{\underline{dim}}$ on modules in $\mathrm{mod}k
\widetilde{Q}$ to objects in $\mathcal C_{\widetilde{Q}}$ by setting
$$\mathrm{\underline{dim}}M=\mathrm{\underline{dim}}M_0-(m_i)_{1\leq i \leq m}.$$
The Euler form on $k\widetilde{Q}$-modules $M$ and $N$ is given by
$$\langle M,N\rangle=\mathrm{dim}_{k}\mathrm{Hom}_{k\widetilde{Q}}(M,N)-\mathrm{dim}_{k}\mathrm{Ext}^{1}_{k\widetilde{Q}}(M,N).$$
Note that the Euler form only depends on the dimension vectors of
$M$ and $N$. As in \cite{Hubery}, we define
$$
[M, N]=\mathrm{dim}_{k}\mathrm{Hom}_{k\widetilde{Q}}(M,N)\mbox{ and
}[M, N]^1=\mathrm{dim}_{k}\mathrm{Ext}^{1}_{k\widetilde{Q}}(M,N).
$$

The quantum Caldero-Chapoton map of an acyclic quiver
$\widetilde{Q}$ has been studied in \cite{rupel} and \cite{fanqin}.
Here, we reformulate their definitions to the following map
$$X^{\widetilde{Q}}_?: \mathrm{obj}\mathcal C_{\widetilde{Q}}\longrightarrow \Tcal$$
defined by the following rule: If $M$ is a $k Q$-module and $P$ is a
projective $k \widetilde{Q}$-module, then
                    $$
                       X^{\widetilde{Q}}_{M\oplus P[1]}=\sum_{\underline{e}} |\mathrm{Gr}_{\underline{e}} M|q^{-\frac{1}{2}
\langle
\underline{e},\underline{m}-\underline{e}\rangle}X^{\widetilde{B}\underline{e}-(\widetilde{I}-\widetilde{R})\underline{m}+\underline{\mathrm{dim}}
P/\mathrm{rad}P},
                    $$
where $\underline{\mathrm{dim}} M= \underline{m}$ and
$\mathrm{Gr}_{\underline{e}}M$ denotes the set of all submodules $V$
of $M$ with $\underline{\mathrm{dim}} V= \underline{e}$. Usually, we
omit the upper index $\widetilde{Q}$ in the notation
$X^{\widetilde{Q}}_?$ (except Section 4 and Section 5) if there is
no confusion. We note that
$$
X_{P[1]}=X_{\tau P}=X^{\underline{\mathrm{dim}} P/rad
P}=X^{\underline{\mathrm{dim}}\mathrm{soc}I}=X_{I[-1]}=X_{\tau^{-1}I}.
$$
for any projective $k\widetilde{Q}$-module $P$ and injective
$k\widetilde{Q}$-module $I$ with $\mathrm{soc}I=P/\mathrm{rad}P.$
Hereinafter, we denote by the corresponding underlined small letter
$\underline{x}$ the dimension vector of a $kQ$-module $X$ and view
$\underline{x}$ as a column vector in $\mathbb{Z}^n.$

\section{Multiplication theorems for acyclic quantum cluster algebras}
Throughout this section, assume that $\widetilde{Q}$ is an acyclic
quiver and $Q$ is its full subquiver. In this section, we will prove
a multiplication theorem for any acyclic quantum cluster algebra.
First, we improve Lemma 5.2.1 and Corollary 5.2.2 in \cite{fanqin},
i.e., here we handle the dimension vector of any $kQ$-module while
in \cite{fanqin} the author only deals with dimension vectors of
rigid modules.
\begin{lemma}\label{1}
For any dimension vector $\underline{m}, \underline{e},
\underline{f}\in \mathbb{Z}^{n}_{\geq 0},$ we have
$$(1)\ \Lambda((\widetilde{I}-\widetilde{R})\underline{m}, \widetilde{B}\underline{e})=-\langle \underline{e}, \underline{m}\rangle;$$
$$(2)\ \Lambda(\widetilde{B}\underline{e}, \widetilde{B}\underline{f})=\langle \underline{e}, \underline{f}\rangle-\langle \underline{f}, \underline{e}\rangle.$$
\end{lemma}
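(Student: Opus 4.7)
The plan is to reduce both identities to short linear-algebra manipulations built from three ingredients. First, the compatibility condition $\Lambda(-\widetilde{B})=\widetilde{I}$, combined with antisymmetry of $\Lambda$, can be rewritten as $\widetilde{B}^{tr}\Lambda=\widetilde{I}^{tr}$. Second, since $\widetilde{I}$ consists of the first $n$ columns of the $m\times m$ identity matrix, left multiplication by $\widetilde{I}^{tr}$ extracts the top $n$ rows, giving $\widetilde{I}^{tr}\widetilde{I}=I_n$, $\widetilde{I}^{tr}\widetilde{R}=R$, and $\widetilde{I}^{tr}\widetilde{B}=B$. Third, from $\langle S_i,S_j\rangle=\delta_{ij}-R_{ij}$ and bilinearity of the Euler form on the Grothendieck group of the hereditary algebra $k\widetilde{Q}$, one obtains the closed form $\langle \underline{e},\underline{m}\rangle=\underline{e}^{tr}(I_n-R)\underline{m}$.

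For part (1), I would use antisymmetry of $\Lambda$ to push $\widetilde{B}^{tr}$ to the left:
\[
\Lambda((\widetilde{I}-\widetilde{R})\underline{m},\widetilde{B}\underline{e})=-\underline{e}^{tr}(\widetilde{B}^{tr}\Lambda)(\widetilde{I}-\widetilde{R})\underline{m}=-\underline{e}^{tr}\widetilde{I}^{tr}(\widetilde{I}-\widetilde{R})\underline{m}=-\underline{e}^{tr}(I_n-R)\underline{m},
\]
which is precisely $-\langle \underline{e},\underline{m}\rangle$. For part (2), the same manoeuvre yields
\[
\Lambda(\widetilde{B}\underline{e},\widetilde{B}\underline{f})=\underline{e}^{tr}(\widetilde{B}^{tr}\Lambda)\widetilde{B}\underline{f}=\underline{e}^{tr}B\underline{f}.
\]
Substituting $B=R^{tr}-R$ (the principal part of $\widetilde{B}=\widetilde{R}^{tr}-\widetilde{R}$) and rearranging via the scalar-transpose identities $\underline{e}^{tr}\underline{f}=\underline{f}^{tr}\underline{e}$ and $\underline{f}^{tr}R\underline{e}=\underline{e}^{tr}R^{tr}\underline{f}$, one recognizes $\underline{e}^{tr}(R^{tr}-R)\underline{f}$ as the expansion of $\langle \underline{e},\underline{f}\rangle-\langle \underline{f},\underline{e}\rangle$.

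I do not expect a genuine obstacle here: both sides of each identity depend only on dimension vectors, so nothing uses rigidity, and the improvement over Qin's rigid case in \cite{fanqin} amounts to the observation that $\underline{e}^{tr}(I_n-R)\underline{m}$ is the correct closed form for the Euler form on arbitrary dimension vectors. The one point requiring care is the sign and transpose bookkeeping: the paper's convention $R_{ij}=\dim\mathrm{Ext}^{1}(S_i,S_j)=|\{\mathrm{arrows}\ j\to i\}|$ dictates whether $R$ or $R^{tr}$ appears in the Euler-form formula, and this has to be aligned with the direction in which the rewritten compatibility identity $\widetilde{B}^{tr}\Lambda=\widetilde{I}^{tr}$ is applied in order for both parts to come out cleanly.
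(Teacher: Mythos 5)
Your proof is correct and follows essentially the same route as the paper: both reduce the identities to matrix computations using the compatibility relation (you in the transposed form $\widetilde{B}^{tr}\Lambda=\widetilde{I}^{tr}$, the paper as $\Lambda\widetilde{B}=-\widetilde{I}$), the extraction of principal parts via $\widetilde{I}$, and the Euler-form formula $\langle\underline{e},\underline{m}\rangle=\underline{e}^{tr}(I_n-R)\underline{m}$. The sign and transpose bookkeeping you flag comes out exactly as in the paper's computation, so there is nothing to correct.
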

\begin{proof}
By definition, we have \begin{eqnarray}
   && \Lambda((\widetilde{I}-\widetilde{R})\underline{m}, \widetilde{B}\underline{e})  \nonumber\\
   &=& \underline{m}^{tr}(\widetilde{I}-\widetilde{R})^{tr}\Lambda \widetilde{B}\underline{e}=-\underline{m}^{tr}(\widetilde{I}-\widetilde{R})^{tr}\begin{bmatrix}I_n\\0 \end{bmatrix}\underline{e}\nonumber\\
   &=& -\underline{m}^{tr}(I_{n}-R)^{tr}\underline{e}=-\underline{e}^{tr}(I_{n}-R)\underline{m}\nonumber\\
  &=& -\langle \underline{e}, \underline{m}\rangle.\nonumber
\end{eqnarray}
As for (2), the left side of the desired equation is equal to
$$\underline{e}^{tr}\widetilde{B}^{tr}\Lambda
\widetilde{B}\underline{f}=-\underline{e}^{tr}\widetilde{B}^{tr}\begin{bmatrix}I_n\\0
\end{bmatrix}\underline{f}=-\underline{e}^{tr}B^{tr}\underline{f}.$$
The right side is
\begin{eqnarray}
   && \langle \underline{e}, \underline{f}\rangle-\langle \underline{f}, \underline{e}\rangle  \nonumber\\
   &=& \underline{e}^{tr}(I_{n}-R)\underline{f}-\underline{f}^{tr}(I_{n}-R)\underline{e}\nonumber\\
   &=& \underline{e}^{tr}(I_{n}-R)\underline{f}-\underline{e}^{tr}(I_{n}-R)^{tr}\underline{f}\nonumber\\
  &=& \underline{e}^{tr}(R^{tr}-R)\underline{f}=-\underline{e}^{tr}(R-R^{tr})\underline{f}=-\underline{e}^{tr}B^{tr}\underline{f}.\nonumber
\end{eqnarray}
Thus we prove the lemma.
\end{proof}
\begin{corollary}\label{2}
For any dimension vector $\underline{m}, \underline{l},
\underline{e}, \underline{f}\in \mathbb{Z}^{n}_{\geq 0},$ we have
\begin{eqnarray}
   && \Lambda(\widetilde{B}\underline{e}-(\widetilde{I}-\widetilde{R})\underline{m},\widetilde{B}\underline{f}-(\widetilde{I}-\widetilde{R})\underline{l})  \nonumber\\
  &=&\Lambda((\widetilde{I}-\widetilde{R})\underline{m},(\widetilde{I}-\widetilde{R})\underline{l})+\langle \underline{e}, \underline{f}\rangle-\langle \underline{f}, \underline{e}\rangle-\langle \underline{e}, \underline{l}\rangle+\langle \underline{f}, \underline{m}\rangle.\nonumber
\end{eqnarray}
\end{corollary}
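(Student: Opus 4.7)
The plan is to reduce Corollary \ref{2} directly to Lemma \ref{1} by expanding the bilinear form $\Lambda$ into four pieces using bilinearity, identifying each piece with one of the two identities just proved, and then cleaning up the signs using the skew-symmetry $\Lambda(x,y)=-\Lambda(y,x)$.

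\medskip

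Concretely, I would first write
\[
\Lambda\bigl(\widetilde{B}\underline{e}-(\widetilde{I}-\widetilde{R})\underline{m},\ \widetilde{B}\underline{f}-(\widetilde{I}-\widetilde{R})\underline{l}\bigr)
=T_1 - T_2 - T_3 + T_4,
\]
where
\[
T_1=\Lambda(\widetilde{B}\underline{e},\widetilde{B}\underline{f}),\quad
T_2=\Lambda(\widetilde{B}\underline{e},(\widetilde{I}-\widetilde{R})\underline{l}),
\]
\[
T_3=\Lambda((\widetilde{I}-\widetilde{R})\underline{m},\widetilde{B}\underline{f}),\quad
T_4=\Lambda((\widetilde{I}-\widetilde{R})\underline{m},(\widetilde{I}-\widetilde{R})\underline{l}).
\]
Then I apply Lemma \ref{1}(2) to get $T_1=\langle\underline{e},\underline{f}\rangle-\langle\underline{f},\underline{e}\rangle$, and Lemma \ref{1}(1) to get $T_3=-\langle\underline{f},\underline{m}\rangle$. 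For $T_2$, since Lemma \ref{1}(1) gives the bilinear form in the opposite order, I would use skew-symmetry: $T_2=-\Lambda((\widetilde{I}-\widetilde{R})\underline{l},\widetilde{B}\underline{e})=\langle\underline{e},\underline{l}\rangle$. Substituting these values yields exactly the claimed identity, with the $T_4$ piece remaining untouched on the right-hand side.

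\medskip

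There is essentially no obstacle here: the corollary is pure bilinear-form bookkeeping on top of Lemma \ref{1}, and the only small trap is to track the sign in $T_2$ correctly when invoking skew-symmetry of $\Lambda$ so as not to flip the sign of $\langle\underline{e},\underline{l}\rangle$ in the final expression. No hypothesis on $\underline{e},\underline{f},\underline{m},\underline{l}$ beyond being integer vectors is actually used in this step, so the non-negativity assumption is inherited only because it appears in the statement of Lemma \ref{1}.
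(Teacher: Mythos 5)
Your proof is correct and is exactly the intended argument: the paper states Corollary \ref{2} without proof precisely because it follows from Lemma \ref{1} by bilinearity and skew-symmetry of $\Lambda$, which is what you carried out, with all signs (in particular the one in $T_2$) handled correctly. Your closing remark is also right: the identities are linear in the dimension vectors, so the non-negativity hypothesis plays no role beyond matching the statement of Lemma \ref{1}.
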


For any $kQ-$modules $M,N,E$, denote by $\varepsilon_{MN}^{E}$  the
cardinality of the set $\mathrm{Ext}^{1}_{kQ}(M,N)_{E}$ which is the
subset of $ \mathrm{Ext}^{1}_{kQ}(M,N)$ consisting of those
equivalence classes of short exact sequences with middle term
isomorphic to $M$ (\cite[Section 4]{Hubery}). For $kQ$-modules $M$,
$A$ and $B$, we denote by $F^M_{AB}$ the number of submodules $U$ of
$M$ such that $U$ is isomorphic to $B$ and $M/U$ is isomorphic to
$A$. Then by definition, we have
$$|\mathrm{Gr}_{\underline{e}}(M)|=\sum_{A, B;
\underline{\mathrm{dim}}B=\underline{e}}F_{AB}^M.
$$ Different from the case in cluster categories, for $kQ$-modules,
it does not generally hold  that $X_{N}X_{M}=X_{N\oplus M}.$ We have
the following explicit characterization, which is a generalization
of \cite[Proposition 5.3.2]{fanqin}.
\begin{theorem}\label{hall multi}
Let $M$ and $N$ be $kQ$-modules. Then
$$q^{[M,N]^{1}}X_{N}X_{M}=q^{-\frac{1}{2}\Lambda((\widetilde{I}-\widetilde{R})\underline{m},
(\widetilde{I}-\widetilde{R})\underline{n})}
\sum_{E}\varepsilon_{MN}^{E}X_E.$$
\end{theorem}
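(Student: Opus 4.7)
My approach is to expand both sides in the monomial basis $\{X^{\mathbf{c}}\}$ of $\mathcal{T}_M$, reduce the identity to a combinatorial statement about submodules, and prove the latter by a double-counting argument in the spirit of \cite{Hubery}. First I would expand $X_N X_M$ using the definition of the quantum Caldero-Chapoton map together with the multiplication rule $X^a X^b = q^{\Lambda(a,b)/2} X^{a+b}$. The cross term $\Lambda(\widetilde{B}\underline{f}-(\widetilde{I}-\widetilde{R})\underline{n},\,\widetilde{B}\underline{e}-(\widetilde{I}-\widetilde{R})\underline{m})$ is handled by Corollary \ref{2}, which together with the skew-symmetry of $\Lambda$ rewrites it as
\[
-\Lambda((\widetilde{I}-\widetilde{R})\underline{m},\,(\widetilde{I}-\widetilde{R})\underline{n}) + \langle\underline{f},\underline{e}\rangle - \langle\underline{e},\underline{f}\rangle - \langle\underline{f},\underline{m}\rangle + \langle\underline{e},\underline{n}\rangle.
\]
The first summand here produces exactly the normalization $q^{-\frac{1}{2}\Lambda((\widetilde{I}-\widetilde{R})\underline{m},(\widetilde{I}-\widetilde{R})\underline{n})}$ demanded on the right-hand side and can be factored out.

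Since any $E$ with $\varepsilon^E_{MN}\ne 0$ satisfies $\underline{\dim} E=\underline{m}+\underline{n}$, each $X_E$ is a sum of monomials of the form $X^{\widetilde{B}\underline{g}-(\widetilde{I}-\widetilde{R})(\underline{m}+\underline{n})}$, which match the ones arising on the left side under the substitution $\underline{g}=\underline{e}+\underline{f}$. Comparing coefficients of a fixed such monomial and simplifying the Euler-form contributions (in particular, fully expanding $\langle\underline{g},\underline{m}+\underline{n}-\underline{g}\rangle$ and cancelling common terms against those introduced in the previous step), the theorem reduces to the combinatorial identity
\[
q^{[M,N]^1}\sum_{\underline{e}+\underline{f}=\underline{g}} |\mathrm{Gr}_{\underline{e}} M|\,|\mathrm{Gr}_{\underline{f}} N|\, q^{\langle\underline{e},\,\underline{n}-\underline{f}\rangle} \;=\; \sum_E \varepsilon^E_{MN}\, |\mathrm{Gr}_{\underline{g}} E|
\]
for each $\underline{g}\in\mathbb{Z}_{\ge 0}^n$. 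As a sanity check, for the quiver with no arrows this specialises to the classical $q$-Vandermonde identity.

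To prove this combinatorial identity I would count pairs $(\xi,V)$, with $\xi:\,0\to N\to E\to M\to 0$ an equivalence class of short exact sequences and $V\subset E$ a submodule of dimension $\underline{g}$, via the assignment $(\xi,V)\mapsto (A,B)$, where $B=V\cap N\subset N$ and $A=p(V)\subset M$ under the projection $p:E\to M$. For a fixed pair $(A,B)$ the fiber is parametrized by: (i) those $\xi\in\mathrm{Ext}^1(M,N)$ whose image under the composite $\mathrm{Ext}^1(M,N)\to\mathrm{Ext}^1(A,N)\to\mathrm{Ext}^1(A,N/B)$ vanishes, and (ii) a splitting of the resulting split sequence, contributing $|\mathrm{Hom}(A,N/B)|$. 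Combining the long exact sequences of $\mathrm{Ext}^*(-,N)$ applied to $0\to A\to M\to M/A\to 0$ and of $\mathrm{Ext}^*(A,-)$ applied to $0\to B\to N\to N/B\to 0$, together with $|\mathrm{Ext}^1(X,Y)|=q^{[X,Y]^1}$, yields an alternating sum of $\mathrm{Hom}/\mathrm{Ext}^1$ dimensions that telescopes into the predicted exponent $[M,N]^1+\langle\underline{e},\underline{n}-\underline{f}\rangle$.

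The main obstacle is precisely this final bookkeeping step: one must accurately separate the extension classes whose obstruction to lifting actually vanishes from those where it does not, and show that weighting each pair $(A,B)$ by $|\mathrm{Hom}(A,N/B)|$ combined with the size of the $\mathrm{Ext}^1(M,N)$-fiber collapses cleanly to $q^{[M,N]^1+\langle\underline{e},\underline{n}-\underline{f}\rangle}\,|\mathrm{Gr}_{\underline{e}} M|\,|\mathrm{Gr}_{\underline{f}} N|$. This is exactly the type of Ringel--Hall theoretic argument developed by Hubery in \cite{Hubery}; one must be careful to work with $\mathrm{Ext}^1$-vector spaces rather than equivalence classes of sequences, so as to avoid spurious automorphism-group factors and keep the telescoping of Euler characteristics clean. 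Once the combinatorial identity is verified, substituting back through the previous steps recovers the claimed multiplication formula.
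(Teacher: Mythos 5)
Your proposal is correct, and it reaches the theorem by a genuinely different route from the paper. After the step you share with the paper (expanding the quantum Caldero--Chapoton expressions, commuting the two monomials, and using Corollary \ref{2} to pull out $q^{-\frac{1}{2}\Lambda((\widetilde{I}-\widetilde{R})\underline{m},(\widetilde{I}-\widetilde{R})\underline{n})}$), the paper never passes to a coefficientwise statement: it quotes Green's formula from \cite{Green} refined by the isomorphism classes of the subquotients $A,B,C,D$, sums out the middle terms via $\sum_X\varepsilon^X_{AC}=q^{[A,C]^1}$, and concludes with the exponent identity $[M,N]-[A,C]-[B,D]-\langle A,D\rangle+[A,C]^1+[B,D]^1=[M,N]^1+\langle B,C\rangle$. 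You instead reduce to the dimension-graded identity $q^{[M,N]^1}\sum_{\underline{e}+\underline{f}=\underline{g}}|\mathrm{Gr}_{\underline{e}}M|\,|\mathrm{Gr}_{\underline{f}}N|\,q^{\langle\underline{e},\underline{n}-\underline{f}\rangle}=\sum_E\varepsilon^E_{MN}|\mathrm{Gr}_{\underline{g}}E|$ (your exponent bookkeeping is right: the $-\frac{1}{2}\langle\cdot,\cdot\rangle$ terms do collapse to $\langle\underline{e},\underline{n}-\underline{f}\rangle$), which is precisely Green's formula summed over the isomorphism classes of $X,Y$ with fixed dimension vectors, and you propose to prove that summed version directly. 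That plan works, and the step you flag as the ``main obstacle'' is in fact immediate from heredity: for fixed $A\subseteq M$ and $B\subseteq N$, the submodules $V\subseteq E_\xi$ with $V\cap N=B$ and image $A$ form a torsor over $\mathrm{Hom}_{kQ}(A,N/B)$ exactly when the image of $\xi$ in $\mathrm{Ext}^1_{kQ}(A,N/B)$ vanishes, and since $\mathrm{Ext}^2$ vanishes the composite $\mathrm{Ext}^1_{kQ}(M,N)\to\mathrm{Ext}^1_{kQ}(A,N)\to\mathrm{Ext}^1_{kQ}(A,N/B)$ is surjective, so the fiber over $(A,B)$ has exactly $q^{[M,N]^1-[A,N/B]^1}\cdot q^{[A,N/B]}=q^{[M,N]^1+\langle\underline{e},\underline{n}-\underline{f}\rangle}$ elements, as you predicted; also, your counting over $\xi\in\mathrm{Ext}^1(M,N)$ matches the paper's definition of $\varepsilon^E_{MN}$ as a count inside the Ext-group, so no automorphism factors arise. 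The trade-off: your argument is self-contained (it needs only heredity, not Green's theorem), whereas the paper's citation of the refined Green formula is shorter and keeps the finer $A,B,C,D$-refined expression, which it reuses in the proof of Theorem \ref{multi-formula}; your summed identity would not by itself supply that refinement.
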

\begin{proof}
We apply Green's formula in \cite{Green}
$$\sum_{E}\varepsilon_{MN}^{E}F^{E}_{XY}=\sum_{A,B,C,D}q^{[M,N]-[A,C]-[B,D]-\langle
A,D\rangle}F^{M}_{AB}F^{N}_{CD}\varepsilon_{AC}^{X}\varepsilon_{BD}^{Y}.$$
Then
\begin{eqnarray}
   && \sum_{E}\varepsilon_{MN}^{E}X_E  \nonumber\\
   &=& \sum_{E,X,Y}\varepsilon_{MN}^{E}q^{-\frac{1}{2}\langle Y,X\rangle}F^{E}_{XY}X^{\widetilde{B}\underline{y}-(\widetilde{I}-\widetilde{R})\underline{e}}\nonumber\\
  &=&\sum_{A,B,C,D,X,Y}q^{[M,N]-[A,C]-[B,D]-\langle
A,D\rangle-\frac{1}{2}\langle
B+D,A+C\rangle}F^{M}_{AB}F^{N}_{CD}\varepsilon_{AC}^{X}\varepsilon_{BD}^{Y}X^{\widetilde{B}\underline{y}-(\widetilde{I}-\widetilde{R})\underline{e}}.\nonumber
\end{eqnarray}
Since
\begin{eqnarray}
   && X^{\widetilde{B}\underline{y}-(\widetilde{I}-\widetilde{R})\underline{e}} \nonumber\\
   &=& X^{\widetilde{B}(\underline{b}+\underline{d})-(\widetilde{I}-\widetilde{R})(\underline{m}+\underline{n})}\nonumber\\
   &=& q^{-\frac{1}{2}\Lambda(\widetilde{B}\underline{d}-(\widetilde{I}-\widetilde{R})\underline{n},
\widetilde{B}\underline{b}-(\widetilde{I}-\widetilde{R})\underline{m})}X^{\widetilde{B}\underline{d}-(\widetilde{I}-\widetilde{R})\underline{n}}
X^{\widetilde{B}\underline{b}-(\widetilde{I}-\widetilde{R})\underline{m}}\nonumber\\
   &=& q^{-\frac{1}{2}\Lambda((\widetilde{I}-\widetilde{R})\underline{n},
(\widetilde{I}-\widetilde{R})\underline{m})-\frac{1}{2}[\langle
D,B\rangle-\langle B,D\rangle-\langle D,M\rangle+\langle
B,N\rangle]}X^{\widetilde{B}\underline{d}-(\widetilde{I}-\widetilde{R})\underline{n}}
X^{\widetilde{B}\underline{b}-(\widetilde{I}-\widetilde{R})\underline{m}}\nonumber\\
  &=&q^{-\frac{1}{2}\Lambda((\widetilde{I}-\widetilde{R})\underline{n},
(\widetilde{I}-\widetilde{R})\underline{m})}q^{\frac{1}{2}\langle
D,A\rangle-\frac{1}{2}\langle
B,C\rangle}X^{\widetilde{B}\underline{d}-(\widetilde{I}-\widetilde{R})\underline{n}}
X^{\widetilde{B}\underline{b}-(\widetilde{I}-\widetilde{R})\underline{m}}.\nonumber
\end{eqnarray}
Thus
\begin{eqnarray}
   && \sum_{E}\varepsilon_{MN}^{E}X_E  \nonumber\\
   &=& q^{\frac{1}{2}\Lambda((\widetilde{I}-\widetilde{R})\underline{m},
(\widetilde{I}-\widetilde{R})\underline{n})}\sum_{A,B,C,D}q^{[M,N]-[A,C]-[B,D]-\langle
A,D\rangle-\frac{1}{2}\langle
B+D,A+C\rangle+[A,C]^{1}+[B,D]^{1}}\cdot\nonumber\\
  &&q^{\frac{1}{2}\langle
D,A\rangle-\frac{1}{2}\langle
B,C\rangle}F^{M}_{AB}F^{N}_{CD}X^{\widetilde{B}\underline{d}-(\widetilde{I}-\widetilde{R})\underline{n}}
X^{\widetilde{B}\underline{b}-(\widetilde{I}-\widetilde{R})\underline{m}}.\nonumber
\end{eqnarray}
Here we use the following fact
$$\sum_{X}\varepsilon_{AC}^{X}=q^{[A,C]^{1}},\sum_{Y}\varepsilon_{BD}^{Y}=q^{[B,D]^{1}}$$
Note that
$$[M,N]-[A,C]-[B,D]-\langle A,D\rangle+[A,C]^{1}+[B,D]^{1}=[M,N]^{1}+\langle B, C\rangle.$$
Hence
\begin{eqnarray}
   && \sum_{E}\varepsilon_{MN}^{E}X_E  \nonumber\\
   &=& q^{\frac{1}{2}\Lambda((\widetilde{I}-\widetilde{R})\underline{m},
(\widetilde{I}-\widetilde{R})\underline{n})}q^{[M,N]^{1}}\sum_{A,B,C,D}q^{\langle
B,C\rangle-\frac{1}{2}\langle B,C\rangle-\frac{1}{2}\langle
D,A\rangle+\frac{1}{2}\langle D,A\rangle-\frac{1}{2}\langle
B,C\rangle}\cdot\nonumber\\
  && F^{N}_{CD}q^{-\frac{1}{2}\langle D,C\rangle}X^{\widetilde{B}\underline{d}-(\widetilde{I}-\widetilde{R})\underline{n}}
F^{M}_{AB}q^{-\frac{1}{2}\langle
B,A\rangle}X^{\widetilde{B}\underline{b}-(\widetilde{I}-\widetilde{R})\underline{m}}
\nonumber\\
 &=&q^{\frac{1}{2}\Lambda((\widetilde{I}-\widetilde{R})\underline{m},
(\widetilde{I}-\widetilde{R})\underline{n})}q^{[M,N]^{1}}X_{N}X_{M}.\nonumber
\end{eqnarray}
This completes the proof.
\end{proof}

\begin{remark}
Theorem \ref{hall multi} is similar to the multiplication formula in
dual Hall algebras. It is reasonable to conjecture that it provides
some PBW-type basis (\cite{GP}) in the corresponding quantum cluster
algebra.
\end{remark}

Let $M,N$ be $kQ-$modules and  assume that
$$\mathrm{dim}_{k}\mathrm{Ext}^{1}_{k\widetilde{Q}}(M,N)=\mathrm{dim}_{k}\mathrm{Hom}_{k\widetilde{Q}}(N,\tau M)=1.$$
Then there are two ``canonical'' exact
sequences
$$\varepsilon:\quad 0\longrightarrow N\longrightarrow E\longrightarrow M\longrightarrow 0$$
$$\varepsilon': \quad 0\longrightarrow D_{0}\longrightarrow N\longrightarrow \tau M\longrightarrow \tau A\oplus I\longrightarrow 0$$
which induces the $k$-bases of
$\mathrm{Ext}^{1}_{k\widetilde{Q}}(M,N)$ and
$\mathrm{Hom}_{k\widetilde{Q}}(N,\tau M)$, respectively. We fix
them. Set $M=M'\oplus P_0, A_0=A\oplus P_0$ where $P_0$ is a
projective $k\widetilde{Q}$-module, $A$ and $M'$ have no projective
summands. The exact sequences also provide the two non-split
triangles in $\mathcal{C}_{\widetilde{Q}}$:
$$N\longrightarrow E\longrightarrow M\longrightarrow N[1]=\tau N$$
and
$$M\longrightarrow D_{0}\oplus  A_0\oplus I[-1]\longrightarrow N\longrightarrow \tau M.$$

Now we state the first part of our multiplication theorem for
acyclic quantum cluster algebras, which can be viewed as a quantum
analogue of the one-dimensional Caldero-Keller multiplication
theorem in \cite{CK2}. The main idea in the proof comes from
\cite{Hubery}.
\begin{theorem}\label{multi-formula}
With the above notation,  assume that
$\mathrm{Hom}_{k\widetilde{Q}}(D_0,\tau A_0\oplus
I)=\mathrm{Hom}_{k\widetilde{Q}}(A_0,I)=0.$ Then the following
formula holds
$$ X_{N}X_M=q^{\frac{1}{2}\Lambda((\widetilde{I}-\widetilde{R})\underline{n},
(\widetilde{I}-\widetilde{R})\underline{m})}X_E+q^{\frac{1}{2}\Lambda((\widetilde{I}-\widetilde{R})\underline{n},
(\widetilde{I}-\widetilde{R})\underline{m})+\frac{1}{2}\langle
M,N\rangle-\frac{1}{2}\langle A_0, D_0\rangle}X_{D_0\oplus A_0\oplus
I[-1]}.$$
\end{theorem}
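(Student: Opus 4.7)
My plan is to follow the Hall-theoretic strategy of \cite{Hubery}: apply Theorem \ref{hall multi} to reduce $X_{N}X_{M}$ to a sum over middle terms of short exact sequences $0\to N\to F\to M\to 0$, then use the second canonical sequence $\varepsilon'$ to trade the split contribution for the cluster-category term $X_{D_{0}\oplus A_{0}\oplus I[-1]}$.

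First, I would apply Theorem \ref{hall multi} to the pair $(N,M)$. Since $[M,N]^{1}=1$, the group $\mathrm{Ext}^{1}_{k\widetilde{Q}}(M,N)$ has $q$ elements: the zero class, whose middle term is $M\oplus N$, and the $q-1$ nonzero scalar multiples of $\varepsilon$, all with middle term isomorphic to $E$. Given that $E\not\cong M\oplus N$ (which will fall out from the analysis of $\varepsilon'$ below), this yields $\varepsilon^{M\oplus N}_{MN}=1$ and $\varepsilon^{E}_{MN}=q-1$, so that
\[
q\,X_{N}X_{M}=q^{\tfrac{1}{2}\Lambda((\widetilde{I}-\widetilde{R})\underline{n},(\widetilde{I}-\widetilde{R})\underline{m})}\bigl(X_{M\oplus N}+(q-1)\,X_{E}\bigr),
\]
after invoking the skew-symmetry $-\Lambda(\underline{m},\underline{n})=\Lambda(\underline{n},\underline{m})$.

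Second, the stated theorem is equivalent, by substitution, to the complementary identity
\[
X_{M\oplus N}=X_{E}+q^{\,1+\tfrac12\langle M,N\rangle-\tfrac12\langle A_{0},D_{0}\rangle}\,X_{D_{0}\oplus A_{0}\oplus I[-1]}.
\]
To prove this identity I would split the 4-term sequence $\varepsilon'$ through the image $K=\mathrm{im}(N\to\tau M)$ into two short exact sequences
\[0\to D_{0}\to N\to K\to 0,\qquad 0\to K\to \tau M\to \tau A\oplus I\to 0,\]
and exploit the vanishing hypotheses $\mathrm{Hom}(D_{0},\tau A_{0}\oplus I)=\mathrm{Hom}(A_{0},I)=0$ to pin down a unique (up to scalar) nontrivial extension of $A_{0}$ by $D_{0}$ whose middle term reconstructs $M$ modulo the projective summand $P_{0}$ and the formal summand $I[-1]$. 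A second application of Theorem \ref{hall multi}, now to the pair $(A_{0},D_{0})$, combined with the evaluation of $X_{I[-1]}$ as the monomial $X^{\underline{\mathrm{dim}}\,\mathrm{soc}\,I}$ recorded at the end of Section 2, then produces the complementary identity; the exponent $\tfrac12\langle M,N\rangle-\tfrac12\langle A_{0},D_{0}\rangle$ emerges from the difference of Euler-form twists built into the quantum Caldero-Chapoton map applied to $E$ versus to $D_{0}\oplus A_{0}\oplus I[-1]$.

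The main obstacle is this second step: producing the complementary identity with exactly the right power of $q$. The fine bookkeeping requires matching submodule counts on both sides with the monomial shift produced by the $I[-1]$-summand, and carefully combining Lemma \ref{1} and Corollary \ref{2} with the Euler-form identities. The $\mathrm{Hom}$-vanishing hypotheses play the role of Hubery's rigidity assumptions and are precisely what ensures that no middle terms other than $E$ and $D_{0}\oplus A_{0}\oplus I[-1]$ contribute with nonzero coefficient.
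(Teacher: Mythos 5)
Your first step is sound and is, in substance, the same reduction the paper performs: with $[M,N]^{1}=1$, Theorem \ref{hall multi} gives $q\,X_{N}X_{M}=q^{\frac{1}{2}\Lambda((\widetilde{I}-\widetilde{R})\underline{n},(\widetilde{I}-\widetilde{R})\underline{m})}\bigl(X_{M\oplus N}+(q-1)X_{E}\bigr)$ (and $E\ncong M\oplus N$ holds automatically, since a short exact sequence whose middle term is isomorphic to the direct sum of its ends splits), so the theorem is indeed equivalent to your complementary identity $X_{M\oplus N}=X_{E}+q^{1+\frac{1}{2}\langle M,N\rangle-\frac{1}{2}\langle A_{0},D_{0}\rangle}X_{D_{0}\oplus A_{0}\oplus I[-1]}$. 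This is exactly the identity encoded in the paper's splitting of the coefficient $1=\frac{q-q^{[B,C]^{1}}}{q-1}+\frac{q^{[B,C]^{1}}-1}{q-1}$ inside the Grassmannian expansion of $X_{N}X_{M}$, with the two pieces giving $s_{1}=X_{E}$ and $s_{2}=X_{D_{0}\oplus A_{0}\oplus I[-1]}$.

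The gap is in your second step, which is the actual content of the theorem. You propose to ``pin down a unique nontrivial extension of $A_{0}$ by $D_{0}$ whose middle term reconstructs $M$'' and to apply Theorem \ref{hall multi} to the pair $(A_{0},D_{0})$; but $M$ is not an extension of $A_{0}$ by $D_{0}$ (the relevant module-level sequences are $0\to B_{0}\to M\to A_{0}\to 0$, $0\to D_{0}\to N\to \mathrm{im}f\to 0$ and $0\to \mathrm{im}f\to \tau B_{0}\to I\to 0$), and Theorem \ref{hall multi} applied to $(A_{0},D_{0})$ compares $X_{D_{0}}X_{A_{0}}$ with middle terms of extensions of $A_{0}$ by $D_{0}$, which is not the comparison needed. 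What the proof actually requires, and what your sketch omits, is: (i) the term-by-term correspondence, read off from the commutative diagram attached to a nonzero $f\colon N\to\tau M$, between the submodule data $(X,Y,K,L)$ appearing in the expansion of $X_{D_{0}\oplus A_{0}\oplus I[-1]}$ and the pairs $(A,B,C,D)$ in the expansion of $X_{M\oplus N}$ with $[B,C]^{1}\neq 0$ (with $D=Y$, $A=K$, $B=B_{0}+L$); and (ii) the matching of exponents $[L,X]-\frac{1}{2}\langle Y+L,K+X\rangle$ against $\langle B,C\rangle-\frac{1}{2}\langle B+D,A+C\rangle$ up to the global factor $q^{\frac{1}{2}\langle A_{0},D_{0}\rangle-\frac{1}{2}\langle M,N\rangle}$, which rests on the identities $\langle D,N\rangle+\langle M,D\rangle=\langle D,D_{0}\rangle+\langle A_{0},D\rangle$ and $\langle A_{0},A\rangle+\langle A,D_{0}\rangle=\langle M,A\rangle+\langle A,N\rangle$, proved from $\underline{\mathrm{dim}}N-\underline{\mathrm{dim}}D_{0}=\underline{\mathrm{dim}}\tau M-\underline{\mathrm{dim}}(\tau A_{0}\oplus I)$ together with the hypotheses $\mathrm{Hom}_{k\widetilde{Q}}(D_{0},\tau A_{0}\oplus I)=\mathrm{Hom}_{k\widetilde{Q}}(A_{0},I)=0$ (which also force $[L,X]^{1}=0$, so $[L,X]=\langle L,X\rangle$). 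Saying the exponent ``emerges from the difference of Euler-form twists'' does not substitute for this computation; without (i) and (ii) the complementary identity, and hence the theorem, is not established.
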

Here, we note that
$$\frac{q^{{[M,N]^1}}-1}{q-1}X_{N}X_M=X_{N}X_M.$$
\begin{proof}
By definition, we have
\begin{eqnarray}
   && X_{N}X_M  \nonumber\\
   &=& \sum_{C,D}q^{-\frac{1}{2}\langle D,C\rangle}F^{N}_{CD}X^{\widetilde{B}\underline{d}-(\widetilde{I}-\widetilde{R})\underline{n}}
   \sum_{A,B}q^{-\frac{1}{2}\langle B,A\rangle}F^{M}_{AB}X^{\widetilde{B}\underline{b}-(\widetilde{I}-\widetilde{R})\underline{m}}\nonumber\\
  &=& \sum_{A,B,C,D}F^{M}_{AB}F^{N}_{CD}q^{-\frac{1}{2}\langle D,C\rangle-\frac{1}{2}\langle B,A\rangle+\frac{1}{2}\Lambda(\widetilde{B}\underline{d}-
  (\widetilde{I}-\widetilde{R})\underline{n},
\widetilde{B}\underline{b}-(\widetilde{I}-\widetilde{R})\underline{m})}X^{\widetilde{B}(\underline{b}+\underline{d})-(\widetilde{I}-\widetilde{R})(\underline{m}+\underline{n})}
\nonumber\\
 &=&\sum_{A,B,C,D}F^{M}_{AB}F^{N}_{CD}q^{-\frac{1}{2}\langle B+D,A+C\rangle}q^{\langle B,C\rangle}q^{\frac{1}{2}\Lambda((\widetilde{I}-\widetilde{R})\underline{n},
(\widetilde{I}-\widetilde{R})\underline{m})}X^{\widetilde{B}(\underline{b}+\underline{d})-(\widetilde{I}-\widetilde{R})(\underline{m}+\underline{n})}.\nonumber
\end{eqnarray}

We set
$$s_1:=\sum_{E\ncong M\oplus N}\frac{\varepsilon_{MN}^{E}}{q-1}X_E=\sum_{X,Y,E\ncong M\oplus N}
\frac{\varepsilon_{MN}^{E}}{q-1}F^{E}_{XY}q^{-\frac{1}{2}\langle Y,X\rangle}
   X^{\widetilde{B}\underline{y}-(\widetilde{I}-\widetilde{R})\underline{e}}$$
As in the proof of Theorem \ref{hall multi}, we have
\begin{eqnarray}
   && \sum_{X,Y,E}\varepsilon_{MN}^{E}X_E  \nonumber\\
   &=& \sum_{A,B,C,D,X,Y}q^{[M,N]-[A,C]-[B,D]-\langle
A,D\rangle-\frac{1}{2}\langle
B+D,A+C\rangle}F^{M}_{AB}F^{N}_{CD}\varepsilon_{AC}^{X}\varepsilon_{BD}^{Y}X^{\widetilde{B}\underline{y}-(\widetilde{I}-\widetilde{R})\underline{e}}\nonumber\\
  &=& \sum_{A,B,C,D}q^{[M,N]^{1}+\langle
B,C\rangle-\frac{1}{2}\langle
B+D,A+C\rangle}F^{M}_{AB}F^{N}_{CD}X^{\widetilde{B}\underline{y}-(\widetilde{I}-\widetilde{R})\underline{e}}.\nonumber
\end{eqnarray}
On the other hand
$$X_{M\oplus N}=\sum_{A,B,C,D}q^{[B,C]-\frac{1}{2}\langle
B+D,A+C\rangle}F^{M}_{AB}F^{N}_{CD}X^{\widetilde{B}(\underline{b}+\underline{d})-(\widetilde{I}-\widetilde{R})(\underline{m}+\underline{n})}.$$
Thus
$$s_1=\sum_{A,B,C,D}\frac{q^{[M,N]^{1}}-q^{[B,C]^{1}}}{q-1}q^{\langle B,C\rangle-\frac{1}{2}\langle
B+D,A+C\rangle}F^{M}_{AB}F^{N}_{CD}X^{\widetilde{B}(\underline{b}+\underline{d})-(\widetilde{I}-\widetilde{R})(\underline{m}+\underline{n})}.$$

Thirdly we compute the term
$$s_2:=\sum_{A,D_0,I,D_0\ncong N}\frac{|\mathrm{Hom}_{k\widetilde{Q}}(N,\tau M)_{D_{0}AI}|}{q-1}X_{A_{0}\oplus D_{0}\oplus I[-1]}.$$
Here, we use the following notation as in \cite{Hubery}
$$\mathrm{Hom}_{k\widetilde{Q}}(N,\tau M)_{D_0AI}:=\{f\neq 0: N\longrightarrow \tau
M|\mathrm{ker}f\cong D_0, \mathrm{coker}f\cong \tau A\oplus I\}.$$
Note that $\mathrm{dim}_{k}\mathrm{Hom}_{k\widetilde{Q}}(N,\tau
M)=1,$ we have the following exact sequences
$$0\longrightarrow B_0\longrightarrow M\longrightarrow A_0\longrightarrow 0$$
$$0\longrightarrow C\longrightarrow \tau B_0\longrightarrow I\longrightarrow 0$$
where $C=\mathrm{im}f, \mathrm{ker}f=D_0.$

\begin{eqnarray}
   s_2&=& \frac{|\mathrm{Hom}_{k\widetilde{Q}}(N,\tau M)|-1}{q-1}X_{A_0\oplus D_0\oplus I[-1]}  \nonumber\\
   &=& \sum_{X,Y,K,L}\frac{|\mathrm{Hom}_{k\widetilde{Q}}(N,\tau M)|-1}{q-1}F^{D_0}_{XY}F^{A_0}_{KL}q^{[L,X]-\frac{1}{2}\langle
Y+L,K+X\rangle}X^{\widetilde{B}(\underline{y}+\underline{l}+\underline{b_0})-(\widetilde{I}-\widetilde{R})(\underline{m}+\underline{n})}\nonumber\\
  &=&\sum_{A,B,C,D} \frac{q^{[C,\tau B]}-1}{q-1}F^{M}_{AB}F^{N}_{CD}q^{[L,X]-\frac{1}{2}\langle
Y+L,K+X\rangle}X^{\widetilde{B}(\underline{y}+\underline{l}+\underline{b_0})-(\widetilde{I}-\widetilde{R})(\underline{m}+\underline{n})}.\nonumber
\end{eqnarray}
Here, $Y=D,K=A,B=B_0+L$ in the above expression and the equality can
be illustrated by the following diagram:

$$
\xymatrix{&Y\ar@{=}[r]\ar[d]&Y\ar[d]&\tau A\ar@{=}[r]&\tau A&\\
 0\ar[r]&D_0\ar[r]\ar[d]&N\ar[r]\ar[d]&\tau
M\ar[r]\ar[u]&\tau A_0\oplus I\ar[r]\ar[u]&0\\
0\ar[r]&X\ar[r]&C\ar[r]&\tau B\ar[r]\ar[u]&\tau L\oplus
I\ar[r]\ar[u]&0}
$$
We must to check the relation between
$$-\frac{1}{2}\langle Y+L,K+X\rangle+[L,X]$$ and
$$-\frac{1}{2}\langle B+D,A+C\rangle+\langle B,C\rangle.$$
In this case, note that $ D=Y, L=A_0-A,K=A,[L,X]^{1}=[X,\tau L]=0.$
We have
\begin{eqnarray}
  -\frac{1}{2}\langle Y+L,K+X\rangle+[L,X] &=& -\frac{1}{2}\langle Y+L,K+X\rangle+\langle L,X\rangle \nonumber\\
   &=& -\frac{1}{2}\langle D+A_0-A,A+D_0-D\rangle+\langle A-A_0,D_0-D\rangle\nonumber\\
   &=& -\frac{1}{2}\langle D,A\rangle-\frac{1}{2}\langle D,D_0\rangle+\frac{1}{2}\langle D,D\rangle-\frac{1}{2}\langle A_0,A\rangle
   +\frac{1}{2}\langle A_0,D_0\rangle\nonumber\\
   && -\frac{1}{2}\langle A_0,D\rangle+\frac{1}{2}\langle A,A\rangle
   -\frac{1}{2}\langle A,D_0\rangle+\frac{1}{2}\langle A,D\rangle. \nonumber\
\end{eqnarray}
And
\begin{eqnarray}
   && -\frac{1}{2}\langle B+D,A+C\rangle+\langle B,C\rangle \nonumber\\
   &=& -\frac{1}{2}\langle M-A+D,A+N-D\rangle+\langle M-A,N-D\rangle\nonumber\\
   &=& -\frac{1}{2}\langle M,A\rangle-\frac{1}{2}\langle M,D\rangle+\frac{1}{2}\langle A,A\rangle-\frac{1}{2}\langle A,N\rangle+\frac{1}{2}\langle A,D\rangle\nonumber\\
   && -\frac{1}{2}\langle D,A\rangle-\frac{1}{2}\langle D,N\rangle+\frac{1}{2}\langle D,D\rangle+\frac{1}{2}\langle M,N\rangle. \nonumber\
\end{eqnarray}
Hence it is equivalent to compare
$$-\frac{1}{2}\langle D,D_0\rangle-\frac{1}{2}\langle A_0,A\rangle+\frac{1}{2}\langle A_0,D_0\rangle-\frac{1}{2}\langle A_0,D\rangle-\frac{1}{2}\langle A,D_0\rangle$$
and
$$-\frac{1}{2}\langle D,N\rangle-\frac{1}{2}\langle M,A\rangle+\frac{1}{2}\langle M,N\rangle-\frac{1}{2}\langle M,D\rangle-\frac{1}{2}\langle A,N\rangle.$$
We claim that
$$\langle D,N\rangle+\langle M,D\rangle=\langle D,D_0\rangle+\langle A_0,D\rangle$$
and
$$\langle A_0,A\rangle+\langle A,D_0\rangle=\langle M,A\rangle+\langle A,N\rangle.$$
Indeed, we have
\begin{eqnarray}
  \langle D,N-D_0\rangle &=& \langle D,\tau M-\tau A_0-I\rangle \nonumber\\
   &=& \langle D,\tau M-\tau A_0\rangle=\langle A_0-M,D\rangle. \nonumber\
\end{eqnarray}
In the same way, we also have
$$
\langle A, N-D_0\rangle=\langle A_0-M, A\rangle.
$$

 Thus
$$s_2=q^{\frac{1}{2}\langle
A_0,D_0\rangle-\frac{1}{2}\langle
M,N\rangle}\sum_{A,B,C,D}\frac{q^{[B,C]^{1}}-1}{q-1}q^{\langle
B,C\rangle-\frac{1}{2}\langle
B+D,A+C\rangle}F^{M}_{AB}F^{N}_{CD}X^{\widetilde{B}(\underline{b}+\underline{d})-(\widetilde{I}-\widetilde{R})(\underline{m}+\underline{n})}.$$

Therefore we have the following multiplication formula
$$ X_{N}X_M=q^{\frac{1}{2}\Lambda((\widetilde{I}-\widetilde{R})\underline{n},
(\widetilde{I}-\widetilde{R})\underline{m})}X_E+q^{\frac{1}{2}\Lambda((\widetilde{I}-\widetilde{R})\underline{n},
(\widetilde{I}-\widetilde{R})\underline{m})+\frac{1}{2}\langle
M,N\rangle-\frac{1}{2}\langle A_0,D_0\rangle}X_{D_0\oplus A_0\oplus
I[-1]}.$$
\end{proof}

There are three canonical special cases satisfying the assumption
$\mathrm{Hom}_{k\widetilde{Q}}(D_0,\tau A\oplus
I)=\mathrm{Hom}_{k\widetilde{Q}}(A_0,I)=0$ in Theorem
\ref{multi-formula}.

\noindent\textbf{Special case I}. Assume that $A_0=0=I.$ Then
$L=K=0=A.$ If $B\neq M,$ i.e, $B\subsetneqq M,$ then there exists
$f_1: N\longrightarrow \tau M$ induced by the above diagram which is
not surjective. It is a contradiction to the assumption
$\mathrm{dim}_{k}\mathrm{Hom}_{k\widetilde{Q}}(N,\tau M)=1.$ In this
case, the multiplication formula is
$$ X_{N}X_M=q^{\frac{1}{2}\Lambda((\widetilde{I}-\widetilde{R})\underline{n},
(\widetilde{I}-\widetilde{R})\underline{m})}X_E+q^{\frac{1}{2}\Lambda((\widetilde{I}-\widetilde{R})\underline{n},
(\widetilde{I}-\widetilde{R})\underline{m})+\frac{1}{2}\langle
M,N\rangle}X_{D_0}.$$

\noindent\textbf{Special case II}. Assume that $D_0=0$ and
$\mathrm{Hom}_{k\widetilde{Q}}(A_0,I)=0.$ Then $Y=X=0, C=N.$ In this
case, the multiplication formula is
$$ X_{N}X_M=q^{\frac{1}{2}\Lambda((\widetilde{I}-\widetilde{R})\underline{n},
(\widetilde{I}-\widetilde{R})\underline{m})}X_E+q^{\frac{1}{2}\Lambda((\widetilde{I}-\widetilde{R})\underline{n},
(\widetilde{I}-\widetilde{R})\underline{m})+\frac{1}{2}\langle
M,N\rangle}X_{A_0\oplus I[-1]}.$$

\noindent\textbf{Special case III}. Assume that $M,N$ are
indecomposable rigid $kQ$-mod and
$$\mathrm{dim}_k\mathrm{Ext}^1_{\mathcal{C}_{\widetilde{Q}}}(M, N)=1.$$
Since $D_0\oplus A_0\oplus I[-1]$ is rigid, then the assumption
$\mathrm{Hom}_{k\widetilde{Q}}(D_0,\tau A\oplus
I)=\mathrm{Hom}_{k\widetilde{Q}}(A_0,I)=0$ in Theorem
\ref{multi-formula} holds.
\begin{lemma}\label{special}
With the assumption in Special case III, we have $\frac{1}{2}\langle
A_0,D_0\rangle-\frac{1}{2}\langle M,N\rangle=\frac{1}{2}.$
\end{lemma}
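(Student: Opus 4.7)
The plan is to verify the dim-vector identity $\langle A_0,D_0\rangle-\langle M,N\rangle=1$ as a direct Euler-form computation, using the rigidity hypotheses together with the AR-based tools already deployed in the proof of Theorem~\ref{multi-formula}. The overall strategy is a reduction in two stages: first peel off the $\mathrm{Ext}$-contributions via 2-Calabi--Yau rigidity, reducing the identity to the Hom-dimension equality $[M,N]=[A_0,D_0]$; then establish this Hom-equality through a factorization argument across the two exchange triangles.

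For the reduction, I would use the 2-Calabi--Yau decomposition $\mathrm{Ext}^1_{\mathcal{C}_{\widetilde{Q}}}(X,Y)\cong\mathrm{Ext}^1_{k\widetilde{Q}}(X,Y)\oplus D\mathrm{Ext}^1_{k\widetilde{Q}}(Y,X)$ valid for $k\widetilde{Q}$-modules. The hypothesis $\mathrm{dim}_{k}\mathrm{Ext}^1_{\mathcal{C}_{\widetilde{Q}}}(M,N)=1$ combined with the presence of the non-split sequence $0\to N\to E\to M\to 0$ forces $[M,N]^1=1$ and $[N,M]^1=0$. Rigidity of the middle term $D_0\oplus A_0\oplus I[-1]$ of the second exchange triangle (a standard 2-CY fact for exchange triangles with rigid endpoints) yields $[A_0,D_0]^1=[D_0,A_0]^1=0$. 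Expanding $\langle X,Y\rangle=[X,Y]-[X,Y]^1$ then reduces the claim to $[M,N]=[A_0,D_0]$.

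For this Hom equality I would argue in two sub-steps. Applying $\mathrm{Hom}_{k\widetilde{Q}}(M,-)$ to $0\to D_0\to N\to C\to 0$ (where $C=\mathrm{im}(N\to\tau M)$), one has $\mathrm{Hom}_{k\widetilde{Q}}(M,C)=0$: since $C\hookrightarrow\tau M$ and $\mathrm{Hom}_{k\widetilde{Q}}(M,\tau M)$ is the sole potentially non-zero summand of the cluster-category Hom $\mathrm{Hom}_{\mathcal{C}_{\widetilde{Q}}}(M,\tau M)\cong \mathrm{Ext}^1_{\mathcal{C}_{\widetilde{Q}}}(M,M)=0$ (the other summands vanish for degree reasons using that $kQ$ is hereditary and $M$ is rigid), the module-level Hom into $\tau M$ vanishes. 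This yields $[M,N]=[M,D_0]$. Then $\mathrm{Hom}_{k\widetilde{Q}}(-,D_0)$ applied to $0\to B_0\to M\to A_0\to 0$, combined with $[A_0,D_0]^1=0$, gives $[M,D_0]=[A_0,D_0]+[B_0,D_0]$, so the entire problem reduces to establishing $[B_0,D_0]=0$.

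The main obstacle is this last vanishing. Since $B_0$ is not a priori rigid (only the quotient $A_0$ is), the identity must be extracted from the specific structure of the exchange triangles. Concretely, I expect the argument to use the second induced short exact sequence $0\to C\to\tau B_0\to I\to 0$ appearing in the proof of Theorem~\ref{multi-formula}, together with the Auslander--Reiten duality $[B_0,\tau M]=[M,B_0]^1$ (for the non-projective part of $M$) and a Hom/Ext comparison in $\mathcal{C}_{\widetilde{Q}}$ against the triangle $M\to D_0\oplus A_0\oplus I[-1]\to N\to\tau M$; any non-zero morphism $B_0\to D_0$ would compose with $D_0\hookrightarrow N$ to contradict the uniqueness (up to scalar) of the generator of $\mathrm{Hom}_{k\widetilde{Q}}(N,\tau M)$.
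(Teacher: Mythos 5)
Your reductions are correct as far as they go: with $[M,N]^1=1$, $[A_0,D_0]^1=0$ (rigidity of the middle term $D_0\oplus A_0\oplus I[-1]$), the lemma is indeed equivalent to $[M,N]=[A_0,D_0]$; the step $[M,N]=[M,D_0]$ via $[M,C]\subseteq[M,\tau M]\cong D\mathrm{Ext}^1_{k\widetilde Q}(M,M)=0$ is sound, and so is $[M,D_0]=[A_0,D_0]+[B_0,D_0]$ from $0\to B_0\to M\to A_0\to 0$ and $[A_0,D_0]^1=0$. But the proof then stops exactly where the content lies: you never prove $[B_0,D_0]=0$, you only say you ``expect'' an argument. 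The mechanism you sketch does not work as stated: a nonzero map $B_0\to D_0$ composed with $D_0\hookrightarrow N$ produces a nonzero element of $\mathrm{Hom}_{k\widetilde Q}(B_0,N)$, which is a map \emph{into} $N$; it yields no second morphism $N\to\tau M$ and hence no contradiction with $\dim_k\mathrm{Hom}_{k\widetilde Q}(N,\tau M)=1$. Moreover $B_0$ is not rigid and none of the hypotheses (rigidity of $M$, $N$, and of $D_0\oplus A_0\oplus I[-1]$) directly control $\mathrm{Hom}(B_0,D_0)$, so this vanishing is genuinely the hard part of the lemma, not a routine finishing touch; as written, your argument for it is a gap.

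For comparison, the paper avoids ever meeting $B_0$ on the Hom side: it writes $\langle A_0,D_0\rangle=\langle A_0,N\rangle-\langle A_0,N/D_0\rangle$ and proves (i) $[M,N]=[A_0,N]$ by applying $\mathrm{Hom}_{k\widetilde Q}(N,-)$ to $0\to N/D_0\to\tau M\to\tau A_0\oplus I\to 0$, using rigidity of $N$ to kill $[N,N/D_0]^1$ and AR duality to identify $[N,\tau M]^1$ and $[N,\tau A_0]^1$ with $D[M,N]$ and $D[A_0,N]$; and (ii) $[A_0,N/D_0]=0$, deduced from $[A_0,\tau M]=0$, which in turn comes from $[M,A_0]^1=0$ obtained by applying $\mathrm{Hom}_{k\widetilde Q}(\tau M,-)$ to the same cokernel sequence and using rigidity of $M$. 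In other words, the peeling is done on the $A_0$/$N$ side, where every needed vanishing is forced by the stated rigidity hypotheses; your peeling on the $D_0$/$B_0$ side leads to a statement that (while true, as it follows a posteriori from the lemma) you have not established independently. To repair your route you would need an actual proof of $[B_0,D_0]=0$, and the natural way to get one is essentially to rerun the paper's argument, so as it stands the proposal is incomplete.
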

\begin{proof}
Note that we have $$\frac{1}{2}\langle
A_0,D_0\rangle-\frac{1}{2}\langle M,N\rangle=\frac{1}{2}\langle
A_0,N-N/D_0\rangle-\frac{1}{2}\langle M,N\rangle.$$ We need to
confirm the two equations
\begin{enumerate}
  \item $\langle
M,N\rangle=\langle A_0,N\rangle-1$ and
  \item $\langle
A_0,N/D_0\rangle=0.$
\end{enumerate}
Note that $A_0\oplus N$ is rigid, thus $[A_0,N]^{1}=0.$ We have the
following exact sequences
$$0\longrightarrow N/D_0\longrightarrow \tau M\longrightarrow \tau A_0 \oplus I\longrightarrow 0$$
$$0\longrightarrow D_0\longrightarrow N\longrightarrow N/D_0\longrightarrow 0$$
Applying the functor $\mathrm{Hom}_{k\widetilde{Q}}(N,-)$, we have
the following exact sequences
$$[N,N/D_0]^{1}\longrightarrow [N,\tau M]^{1}\longrightarrow [N,\tau A_0\oplus I]^{1}\longrightarrow 0$$
$$[N,N]^{1}\longrightarrow [N,N/D_0]^{1}\longrightarrow 0.$$
Hence
$$\langle
M,N\rangle=[M,N]-1=[A_0,N]-1=\langle A_0,N\rangle-1.$$ As for the
second equation, apply the functor
$\mathrm{Hom}_{k\widetilde{Q}}(A_0,-)$ to the exact sequence
$$0\longrightarrow D_0\longrightarrow N\longrightarrow N/D_0\longrightarrow 0$$
We have the following exact sequence
$$[A_0,N]^{1}\longrightarrow [A_0,N/D_0]^{1}\longrightarrow 0$$
Thus $[A_0,N/D_0]^{1}=0.$ Applying the functor
$\mathrm{Hom}_{k\widetilde{Q}}(\tau M,-)$ to the exact sequence
$$0\longrightarrow N/D_0\longrightarrow \tau M\longrightarrow \tau A_0 \oplus I\longrightarrow 0$$
We have the following exact sequence
$$[\tau M,\tau M]^{1}\longrightarrow [\tau M,\tau A_0\oplus I]^{1}\longrightarrow 0$$
Thus we have $$[M,A_0]^{1}=[A_0,\tau M]=0.$$ Again applying the
functor $\mathrm{Hom}_{k\widetilde{Q}}(A_0,-)$, we have the exact
sequence
$$0\longrightarrow [A_0,N/D_0]\longrightarrow [A_0,\tau M]=0$$
Hence $[A_0,N/D_0]=0.$
\end{proof}

By Lemma \ref{special}, we obtain the following multiplication
theorem between quantum cluster variables in \cite{fanqin}.
\begin{corollary}
Let $M$ and $N$ be indecomposable rigid $kQ$-modules and
$\mathrm{dim}_k\mathrm{Ext}^1_{\mathcal{C}_{\widetilde{Q}}}(M,
N)=1.$ Let
$$N\longrightarrow E\longrightarrow M\longrightarrow N[1]=\tau N$$
and
$$M\longrightarrow D_{0}\oplus A_{0}\oplus I[-1]\longrightarrow N\longrightarrow \tau M$$
be two non-split triangles in $\mathcal{C}_{\widetilde{Q}}$ as
above. Then we have
$$ X_{N}X_M=q^{\frac{1}{2}\Lambda((\widetilde{I}-\widetilde{R})\underline{n},
(\widetilde{I}-\widetilde{R})\underline{m})}X_E+q^{\frac{1}{2}\Lambda((\widetilde{I}-\widetilde{R})\underline{n},
(\widetilde{I}-\widetilde{R})\underline{m})-\frac{1}{2}}X_{D_0\oplus
A_0\oplus I[-1]}.$$
\end{corollary}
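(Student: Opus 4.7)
The plan is to derive the corollary directly from Theorem \ref{multi-formula} combined with Lemma \ref{special}; essentially no new work is required beyond confirming the hypotheses and bookkeeping the exponents.

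First, I would verify that we sit inside Special case III of Theorem \ref{multi-formula}. Under the assumption that $M$ and $N$ are indecomposable rigid $kQ$-modules with $\dim_k\Ext^1_{\mathcal{C}_{\widetilde{Q}}}(M,N)=1$, the middle term $D_0\oplus A_0\oplus I[-1]$ of the second exchange triangle is a rigid object of the cluster category. Unfolding $\Ext^1_{\mathcal{C}_{\widetilde{Q}}}(D_0\oplus A_0\oplus I[-1],D_0\oplus A_0\oplus I[-1])=0$ and using $\tau P_0=0$ (so that $\tau A_0=\tau A$) extracts both $\Hom_{k\widetilde{Q}}(D_0,\tau A_0\oplus I)=0$ and $\Hom_{k\widetilde{Q}}(A_0,I)=0$. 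Hence Theorem \ref{multi-formula} applies and gives
\[
X_NX_M = q^{\frac{1}{2}\Lambda((\widetilde{I}-\widetilde{R})\underline{n},(\widetilde{I}-\widetilde{R})\underline{m})}X_E + q^{\frac{1}{2}\Lambda((\widetilde{I}-\widetilde{R})\underline{n},(\widetilde{I}-\widetilde{R})\underline{m})+\frac{1}{2}\langle M,N\rangle-\frac{1}{2}\langle A_0,D_0\rangle}X_{D_0\oplus A_0\oplus I[-1]}.
\]

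Next, I would substitute the identity from Lemma \ref{special}, which asserts $\frac{1}{2}\langle A_0,D_0\rangle-\frac{1}{2}\langle M,N\rangle=\frac{1}{2}$. This converts the exponent $\frac{1}{2}\langle M,N\rangle-\frac{1}{2}\langle A_0,D_0\rangle$ in the coefficient of $X_{D_0\oplus A_0\oplus I[-1]}$ into $-\frac{1}{2}$, producing exactly the formula stated in the corollary.

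The only step that genuinely requires thought is the extraction of the two Hom-vanishings from rigidity of $D_0\oplus A_0\oplus I[-1]$, since one must pass from cluster-category $\Ext^1$ to module-category $\Hom$ via $\Hom_{\mathcal{C}}(X,Y[1])=\Hom_{\mathcal{D}^b}(X,Y[1])\oplus\Hom_{\mathcal{D}^b}(X,FY[1])$ and identify the relevant summands. However, this is already asserted in the discussion of Special case III immediately preceding the corollary, so the remaining work reduces to pure bookkeeping of exponents; I do not anticipate any genuine obstacle.
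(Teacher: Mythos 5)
Your proposal is correct and follows essentially the same route as the paper: the paper's proof of the corollary is precisely the observation that the hypotheses are Special case III of Theorem \ref{multi-formula} (rigidity of $D_0\oplus A_0\oplus I[-1]$ giving the required Hom-vanishings), followed by substituting the identity $\frac{1}{2}\langle A_0,D_0\rangle-\frac{1}{2}\langle M,N\rangle=\frac{1}{2}$ from Lemma \ref{special} into the exponent. Your extra remark on deducing the Hom-vanishings from rigidity via the cluster-category $\mathrm{Ext}^1$ is exactly what the paper asserts without detail in its discussion of Special case III, so no gap remains.
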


Now let $M$ be a $kQ$-module and $P$ be a projective
$k\widetilde{Q}$-module with $[P,M]=[M,I]=1,$ where $I=\nu(P)$, here
$\nu=\mathrm{DHom}_{k\widetilde{Q}}(-,k\widetilde{Q})$ is the
Nakayama functor. It is well-known that $I$ is an injective module
with $\mathrm{soc}I=P/\mathrm{rad}P.$ Fix two nonzero morphisms
$f\in \mathrm{Hom}_{k\widetilde{Q}}(P, M)$ and $g\in
\mathrm{Hom}_{k\widetilde{Q}}(M, I).$ The two morphisms induce  the
following exact sequences
$$\xymatrix{0\ar[r]& P'\ar[r]& P\ar[r]^{f}& M\ar[r]& A\ar[r]& 0}$$
and
$$\xymatrix{0\ar[r]& B\ar[r]& M\ar[r]^{g}& I\ar[r]& I'\ar[r]& 0}.$$
These correspond to two non-split triangles in
$\mathcal{C}_{\widetilde{Q}}$
$$M\longrightarrow E'\longrightarrow P[1]\longrightarrow M[1]$$
and
$$I[-1]\longrightarrow E\longrightarrow M\longrightarrow I,$$
respectively, where $E\simeq B\oplus I'[-1]$ and $E'\simeq A\oplus
P'[1].$

Now we state the second part of our multiplication theorem for
acyclic quantum cluster algebras.
\begin{theorem}\label{exchange2}
With the above notations, assume that $[B,I']=[P',A]=0.$ Then we
have
$$X_{\tau P}X_{M}=q^{\frac{1}{2}\Lambda(\underline{\mathrm{dim}}P/rad P,
-(\widetilde{I}-\widetilde{R})\underline{m})}X_E+q^{\frac{1}{2}\Lambda(\underline{\mathrm{dim}}P/rad
P,
-(\widetilde{I}-\widetilde{R})\underline{m})-\frac{1}{2}}X_{E'}.$$
\end{theorem}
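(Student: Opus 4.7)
The plan is to compute $X_{\tau P}X_M$ directly, exploiting that $X_{\tau P}=X^{\underline{\mathrm{dim}}P/\mathrm{rad}P}$ is a pure monomial in the quantum torus $\Tcal$. Using the commutation $X^{c}X^{d}=q^{\Lambda(c,d)/2}X^{c+d}$, one expands $X_M$ via the defining sum over $\mathrm{Gr}_{\underline{v}}M$ and pulls the monomial $X^{\underline{\mathrm{dim}}P/\mathrm{rad}P}$ across, at the cost of an explicit $q$-twist $q^{\frac{1}{2}\Lambda(\underline{\mathrm{dim}}P/\mathrm{rad}P,\,\widetilde{B}\underline{v}-(\widetilde{I}-\widetilde{R})\underline{m})}$. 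Thus, unlike the proof of Theorem \ref{multi-formula}, no invocation of Green's formula is needed here; the whole computation reduces to matching a single sum over submodules of $M$ against two simpler sums.

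One then expands the two right-hand-side terms using the definition of the quantum Caldero-Chapoton map. Since $E'\simeq A\oplus P'[1]$, the expansion is a sum over $\mathrm{Gr}_{\underline{v'}}A$ with exponent $\widetilde{B}\underline{v'}-(\widetilde{I}-\widetilde{R})\underline{a}+\underline{\mathrm{dim}}P'/\mathrm{rad}P'$, and analogously for $X_E=X_{B\oplus I'[-1]}$ after rewriting $I'[-1]$ in the cluster-category standard form $N_0\oplus Q[1]$ via the Nakayama functor. Both right-hand terms thus factor as a monomial (coming from the shifted projective part) times a ``classical'' Grassmannian sum over submodules of $A$, respectively of the module part of $E$.

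The core of the argument is a bijective matching between submodules $V\subseteq M$ on the left and the disjoint union of the two families of submodule data on the right. Using the two exact sequences $0\to P'\to P\to M\to A\to 0$ and $0\to B\to M\to I\to I'\to 0$, together with the hypothesis $[P,M]=[M,I]=1$, one decomposes a generic $V\subseteq M$ according to whether the induced composite $V\hookrightarrow M\to A$ vanishes: the ``kernel'' type, where $V$ is controlled by its intersection with $B$, matches submodules of $B$ and contributes to the $X_E$-term, while the ``cokernel'' type matches submodules of $A$ through $V\mapsto V/(V\cap\mathrm{im}(f))$ and contributes to the $X_{E'}$-term. The remaining ``intermediate'' submodules are precisely those killed by the Hom-vanishing hypotheses $[B,I']=[P',A]=0$.

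The main obstacle is, as in Theorem \ref{multi-formula}, the careful bookkeeping of all the $q$-powers. One must verify that the Euler-form exponent $\langle\underline{v},\underline{m}-\underline{v}\rangle$ on the left reassembles, under the bijection, into $\langle\underline{v'},\underline{a}-\underline{v'}\rangle$ plus a controllable Euler-form shift on the $X_{E'}$-side (and analogously on the $X_E$-side), yielding exactly the claimed prefactors $q^{\frac{1}{2}\Lambda(\underline{\mathrm{dim}}P/\mathrm{rad}P,\,-(\widetilde{I}-\widetilde{R})\underline{m})}$ and $q^{\frac{1}{2}\Lambda(\cdots)-\frac{1}{2}}$. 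The identities of Lemma \ref{1} and Corollary \ref{2} convert the $\Lambda$-twists into Euler forms, and the hypotheses $[B,I']=[P',A]=0$ ensure that the cross-cancellations required to match the two sides go through; the extra $-\frac{1}{2}$ in the second prefactor will arise by precisely the same mechanism as in Lemma \ref{special}, namely a single one-dimensional Ext-space left over after the bijection has been accounted for.
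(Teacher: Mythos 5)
Your overall strategy coincides with the paper's: expand $X_M$ by the quantum Caldero--Chapoton formula, commute the monomial $X_{\tau P}=X^{\underline{\mathrm{dim}}P/\mathrm{rad}P}$ across at the cost of an explicit $\Lambda$-twist, and match the resulting sum over submodules of $M$ against the Grassmannian expansions of $X_E=X_{B\oplus I'[-1]}$ and $X_{E'}=X_{A\oplus P'[1]}$; indeed no Green's formula is needed. However, the central combinatorial step is mis-specified. The correct dichotomy for a submodule $H\subseteq M$ is governed by $[P,H]$: since $[P,M]=1$, either $\mathrm{Hom}_{k\widetilde{Q}}(P,H)=0$, which (using $I=\nu P$ and $[M,I]=1$) is equivalent to $H\subseteq B=\ker g$, and these $H$ are precisely the submodules of $B$ occurring in $X_E$; or $[P,H]=1$, equivalently $\mathrm{im}f\subseteq H$, and then $H\mapsto H/\mathrm{im}f$ identifies these with the submodules of $A$ occurring in $X_{E'}$. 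This dichotomy is exhaustive --- there are no ``intermediate'' submodules left over. Your criterion, whether the composite $V\hookrightarrow M\to A$ vanishes, is a different condition (it says $V\subseteq\mathrm{im}f$) and does not separate the two families, and $V\mapsto V/(V\cap\mathrm{im}f)$ applied to arbitrary $V$ is not the required bijection. Consequently your assertion that the hypotheses $[B,I']=[P',A]=0$ serve to ``kill'' leftover submodules is incorrect: in the actual proof they are used only to check that the Euler-form exponents agree term by term, namely $\langle Y,X\rangle=\langle H,G\rangle$, via $\langle H,\mathrm{im}g\rangle=0$ on the $X_E$ side and $\langle P',G\rangle=0$ on the $X_{E'}$ side.

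The origin of the extra $-\frac{1}{2}$ is also misattributed. It does not come from the mechanism of Lemma \ref{special} (which relies on rigidity, not assumed in Theorem \ref{exchange2}, and pertains to the formula of Theorem \ref{multi-formula}); it comes from the compatibility relation $\Lambda(\underline{\mathrm{dim}}P/\mathrm{rad}P,\widetilde{B}\underline{h})=-[P,H]$, so that commuting the monomial across produces the factor
\begin{equation*}
q^{\frac{1}{2}\Lambda(\underline{\mathrm{dim}}P/\mathrm{rad}P,\,-(\widetilde{I}-\widetilde{R})\underline{m})}\,q^{-\frac{1}{2}[P,H]},
\end{equation*}
which on the $[P,H]=0$ part yields the prefactor of $X_E$ and on the $[P,H]=1$ part yields exactly the additional $q^{-\frac{1}{2}}$ in front of $X_{E'}$. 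Without the correct dichotomy and this piece of bookkeeping, the matching of $q$-powers you describe cannot be carried out, so the argument as proposed does not close.
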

\begin{proof}
\begin{eqnarray}
   && X_{\tau P}X_{M}  \nonumber\\
   &=& X^{\underline{\mathrm{dim}} P/rad P}\sum_{G,H}q^{-\frac{1}{2}\langle H,G\rangle}F^{M}_{GH}X^{\widetilde{B}\underline{h}-(\widetilde{I}-\widetilde{R})\underline{m}}
  \nonumber\\
  &=& \sum_{G,H}q^{-\frac{1}{2}\langle H,G\rangle}F^{M}_{GH}q^{\frac{1}{2}\Lambda(\underline{\mathrm{dim}} P/\mathrm{rad}P,\widetilde{B}\underline{h}-(\widetilde{I}-\widetilde{R})
  \underline{m})}X^{\widetilde{B}\underline{h}-(\widetilde{I}-\widetilde{R})\underline{m}+\underline{\mathrm{dim}}
P/rad P}
\nonumber\\
 &=&q^{\frac{1}{2}\Lambda(\underline{\mathrm{dim}} P/\mathrm{rad}P,-(\widetilde{I}-\widetilde{R})\underline{m})}\sum_{G,H}q^{-\frac{1}{2}\langle
H,G\rangle}q^{-\frac{1}{2}[P,H]}F^{M}_{GH}X^{\widetilde{B}\underline{h}-(\widetilde{I}-\widetilde{R})\underline{m}+\underline{\mathrm{\mathrm{dim}}}
P/\mathrm{rad}P}.\nonumber
\end{eqnarray}
Here we use the following fact
$$\Lambda(\underline{\mathrm{dim}} P/\mathrm{rad}P,\widetilde{B}\underline{h})=(\underline{\mathrm{dim}} P/\mathrm{rad}P)^{tr}\Lambda\widetilde{B}\underline{h}=-
(\underline{\mathrm{dim}} P/\mathrm{rad}P)^{tr}\begin{bmatrix}I_n\\0
\end{bmatrix}\underline{h}=-[P,H].$$ Note that by assumption
$[P,M]=1,$ we have that $[P,H]=0\ or \ 1.$

 We firstly compute the  term $$X_E=X_{B\oplus
I'[-1]}=\sum_{X,Y}q^{-\frac{1}{2}\langle
Y,X\rangle}F^{B}_{XY}X^{\widetilde{B}\underline{y}-(\widetilde{I}-\widetilde{R})\underline{b}+\underline{\mathrm{dim}}
\mathrm{soc}I'}.$$  We have the following diagram
$$
\xymatrix{&0\ar[d]&0\ar[d]\\
&Y\ar@{=}[r]\ar[d]&Y\ar[d]\\
 0\ar[r]&B\ar[r]\ar[d]&M\ar[r]^{\theta}\ar[d]&I\ar[r]&I'\ar[r]&0\\
0\ar[r]&X\ar[r]\ar[d]&G\ar[d]\\
&0&0}
$$
and a short exact sequence
$$0\longrightarrow \mathrm{im}\theta \longrightarrow I\longrightarrow I'\longrightarrow 0.$$
As we assume that $[B,I']=0,$ thus $[H,I']=0.$ Then
\begin{eqnarray}
   && \langle Y,X\rangle-\langle H,G\rangle = \langle H,X\rangle-\langle H,G\rangle =\langle H,X-G\rangle =\langle H,B-M\rangle=-\langle H, \mathrm{im}\theta\rangle.\nonumber
\end{eqnarray}
Applying the functor $[H,-]$ to the above short exact sequence, we
have
$$0\longrightarrow [H, \mathrm{im}\theta]\longrightarrow [H, I]\longrightarrow [H, I']\longrightarrow [H, \mathrm{im}\theta]^{1}\longrightarrow 0$$
Note that $[H, I]=[H, I']=0,$ thus $\langle H,
\mathrm{im}\theta\rangle=0.$ Hence
$$X_E=\sum_{G,H,[P,H]=0}q^{-\frac{1}{2}\langle
H,G\rangle}F^{M}_{GH}X^{\widetilde{B}\underline{h}-(\widetilde{I}-\widetilde{R})\underline{m}+\underline{\mathrm{dim}}
P/\mathrm{rad}P}.$$

 Now compute the  term $$X_{E'}=X_{A\oplus
P'[1]}=\sum_{X,Y}q^{-\frac{1}{2}\langle
Y,X\rangle}F^{A}_{XY}X^{\widetilde{B}\underline{y}-(\widetilde{I}-\widetilde{R})\underline{a}+\underline{\mathrm{dim}}
P'/\mathrm{rad}P'}.$$  We have the following diagram
$$
\xymatrix{&&&0\ar[d]&0\ar[d]\\
&&P\ar[r]\ar@{=}[d]&H\ar[r]\ar[d]&Y\ar[r]\ar[d]&0\\
 0\ar[r]&P'\ar[r]&P\ar[r]^{\theta'}&M\ar[r]\ar[d]&A\ar[r]\ar[d]&0\\
&&&G\ar@{=}[r]\ar[d]&X\ar[d]\\
&&&0&0}
$$
Applying the functor $[P',-]$ to the following short exact sequence
$$0\longrightarrow Y \longrightarrow A\longrightarrow G\longrightarrow 0.$$
 we
have
$$0\longrightarrow [P',Y]\longrightarrow [P', A]\longrightarrow [P', G]\longrightarrow 0$$
As we assume that $[P',A]=0,$ thus $[P',G]=0.$ Then $\langle
P',G\rangle=0.$ Hence we have
\begin{eqnarray}
   && \langle Y,X\rangle-\langle H,G\rangle = \langle Y,G\rangle-\langle H,G\rangle =\langle Y-H,G\rangle =\langle
   A-M,G\rangle\nonumber\\
   &=& \langle P'-P,G\rangle=\langle P',G\rangle=0.\nonumber
\end{eqnarray}
Therefore $$X_E'=\sum_{G,H,[P,H]=1}q^{-\frac{1}{2}\langle
H,G\rangle}F^{M}_{GH}X^{\widetilde{B}\underline{h}-(\widetilde{I}-\widetilde{R})\underline{m}+\underline{\mathrm{dim}}
P/\mathrm{rad}P}.$$ This completes the proof.
\end{proof}

Note that if $M$ is indecomposable rigid object in
$\mathcal{C}_{\widetilde{Q}}$ and $[P,M]=[M,I]=1$, then both
$E=B\oplus I'[-1]$ and $E'=A\oplus P'[1]$ are rigid. Thus the
assumptions
$\mathrm{Hom}_{k\widetilde{Q}}(B,I')=\mathrm{Hom}_{k\widetilde{Q}}(P',A)=0$
in Theorem \ref{exchange2} naturally hold. The quantum cluster
multiplication theorem in \cite{fanqin} deals with this special
case.

\section{$\mathbb{ZP}$-bases in  specialized quantum cluster algebras of finite type}
Let $k$ be a finite field with cardinality $|k|=q$ and $m\geq n$ be
two positive integers and $\widetilde{Q}$ an acyclic quiver with
vertex set $\{1,\ldots,m\}$. The full subquiver $Q$ on the vertices
$\{1,\ldots,n\}$ is  the principal part of $\widetilde{Q}$.
 Let $\mathcal{A}_{|k|}(\widetilde{Q})$ be the corresponding specialized
quantum cluster algebra of $Q$ with coefficients. Then the main
theorem in \cite{fanqin} shows that
$\mathcal{A}_{|k|}(\widetilde{Q})$ is the $\mathbb{ZP}$-subalgebra
of $\Fcal$ generated by
$$\{X_M| M\ \text{is  indecomposable rigid $kQ$-mod}\}\cup$$$$
\{X_{\tau P_i},1\leq i\leq n|P_i\ \text{is  indecomposable
projective $k\widetilde{Q}$-mod}\}.$$ Let $i$ be a sink or a source
in $\widetilde{Q}$. We define the reflected quiver
$\sigma_i(\widetilde{Q})$ by reversing all the arrows ending at $i$.
An \emph{admissible sequence of sinks (resp. sources)} is a sequence
$(i_1, \ldots, i_l)$ such that $i_1$ is a sink (resp. source) in
$\widetilde{Q}$ and $i_k$ is a sink (resp source) in
$\sigma_{i_{k-1}}\cdots \sigma_{i_1}(\widetilde{Q})$ for any $k=2,
\ldots, l$. A quiver $\widetilde{Q}'$ is called
\emph{reflection-equivalent}\index{reflection-equivalent} to
$\widetilde{Q}$ if there exists an admissible sequence of sinks or
sources $(i_1, \ldots, i_l)$ such that
$\widetilde{Q}'=\sigma_{i_{l}}\cdots \sigma_{i_1}(\widetilde{Q})$.
Note that mutations can be viewed as generalizations of reflections,
i.e, if $i$ is a sink or a source in a quiver $\widetilde{Q}$,
 then $\mu_i(\widetilde{Q})=\sigma_i(\widetilde{Q})$ where $\mu_i$ denotes the mutation in the direction
 $i$. Thus if $\widetilde{Q}'$ is a quiver mutation-equivalent to $\widetilde{Q}$,
 there is a natural canonical isomorphism between
$\mathcal{A}_{|k|}(\widetilde{Q})$ and
$\mathcal{A}_{|k|}(\widetilde{Q}'),$ denoted by
$$\Phi_{i}: \mathcal{A}_{|k|}(\widetilde{Q})\rightarrow
\mathcal{A}_{|k|}(\widetilde{Q}').$$

Let $\Sigma_i^+:\ \mathrm{mod}(\widetilde{Q}) \longrightarrow \
\mathrm{mod}(\widetilde{Q}')$ be the standard BGP-reflection functor
and $R_i^+:\mathcal C_{\widetilde{Q}} \longrightarrow \mathcal
C_{\widetilde{Q}'}$ be the
        extended BGP-reflection functor defined by \cite{Zhu}:
        $$R_i^+:\left\{\begin{array}{rcll}
            X & \mapsto & \Sigma_i^+(X), & \textrm{ if }X \not \simeq S_i \textrm{ is a $kQ$-module,}\\
            S_i & \mapsto & P_i[1], \\
            P_j[1] & \mapsto & P_j[1], & \textrm{ if }j \neq i,\\
            P_i[1] & \mapsto & S_i.
        \end{array}\right.$$
    By Rupel \cite{rupel}, the following holds:
\begin{theorem}\cite[Theorem 2.4, Lemma 5.6]{rupel}\label{ref}
For any $ X_M^{\widetilde{Q}}\in\mathcal{A}_{|k|}(\widetilde{Q})$,
we have $\Phi_{i}(X_M^{\widetilde{Q}})=X_{R_i^+M}^{\widetilde{Q}'}.$
\end{theorem}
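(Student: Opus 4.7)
The plan is to verify the identity $\Phi_i(X_M^{\widetilde{Q}}) = X_{R_i^+ M}^{\widetilde{Q}'}$ by first establishing it on a $\mathbb{ZP}$-algebra generating set of $\mathcal{A}_{|k|}(\widetilde{Q})$ and then propagating to all rigid objects via the multiplication theorems of Section 3. Since the source case is dual, I assume $i$ is a sink of $\widetilde{Q}$, so that $\mu_i = \sigma_i$ and $\Phi_i$ is implemented by the quantum exchange relation at vertex $i$: it fixes $X_j$ for $j \ne i$ and replaces $X_i$ by the explicit Laurent polynomial given by \eqref{eq:cl_exp}.

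The verification on generators splits into three cases following the definition of $R_i^+$. For $j \ne i$, both $\Phi_i(X_{P_j[1]}) = X_{P_j[1]}$ and $R_i^+ P_j[1] = P_j[1]$ are immediate. For $M = P_i[1]$, which maps to $S_i$, I would compare the exchange relation defining $\Phi_i(X_{P_i[1]})$ directly with the quantum Caldero-Chapoton expansion of $X^{\widetilde{Q}'}_{S_i}$: on $\widetilde{Q}'$ the vertex $i$ is a source, so $S_i$ is projective there, its Grassmannian has exactly two points, and the two resulting monomials should match the two terms of the mutation formula. For an indecomposable rigid $M \not\simeq S_i$, $R_i^+ M = \Sigma_i^+ M$, and I would expand $X^{\widetilde{Q}}_M$ via the quantum Caldero-Chapoton formula, apply $\Phi_i$ monomial by monomial, and reassemble using the bijection $\mathrm{Gr}_{\underline{e}}(M) \leftrightarrow \mathrm{Gr}_{\sigma_i(\underline{e})}(\Sigma_i^+ M)$ induced by the BGP functor together with the invariance of the Euler form under $\sigma_i$.

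A conceptually cleaner alternative, once the base cases are in hand, is induction on the dimension of $M$ using Theorems \ref{multi-formula} and \ref{exchange2}: the extended reflection $R_i^+$ is a triangle equivalence $\mathcal{C}_{\widetilde{Q}} \to \mathcal{C}_{\widetilde{Q}'}$ and sends the two non-split triangles appearing in the multiplication theorem to the corresponding non-split triangles in $\mathcal{C}_{\widetilde{Q}'}$. Applying $\Phi_i$ to the multiplication formula on $\widetilde{Q}$ and comparing with the multiplication formula on $\widetilde{Q}'$ therefore reduces the identity for $X^{\widetilde{Q}}_M$ to the identity for the strictly smaller summands appearing on the right-hand side of the formula.

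The main obstacle is the bookkeeping of the quadratic $q$-exponents: one must confirm that the prefactor $q^{\frac{1}{2}\Lambda((\widetilde{I}-\widetilde{R})\underline{n}, (\widetilde{I}-\widetilde{R})\underline{m})}$ and its analogue on the $\widetilde{Q}'$ side transform correctly under $\Phi_i$. This amounts to checking that the compatible pair $(\Lambda, \widetilde{B})$ mutates under $\mu_i$ in exact parallel with the way the matrices $\widetilde{R}$, $\widetilde{R}^{tr}$ and the Euler form of $k\widetilde{Q}$ change when $\widetilde{Q}$ is replaced by $\sigma_i(\widetilde{Q})$. Once this numerical compatibility is verified, which is essentially a direct matrix computation using the compatibility relation $\Lambda(-\widetilde{B}) = \widetilde{I}$ together with Lemma \ref{1}, the remaining matching of $q$-powers in the two Caldero-Chapoton expansions becomes routine.
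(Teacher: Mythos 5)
You should first note that the paper contains no proof of this statement at all: Theorem \ref{ref} is imported verbatim from Rupel \cite[Theorem 2.4, Lemma 5.6]{rupel}, so the only meaningful comparison is with Rupel's argument, and your sketch does not reproduce its actual content. The central step of your first route is false as stated: there is no bijection $\mathrm{Gr}_{\underline{e}}(M)\leftrightarrow \mathrm{Gr}_{\sigma_i(\underline{e})}(\Sigma_i^+M)$ induced by the BGP functor. For the Kronecker quiver with $i=2$ the sink and $M=P_1$ (so $\underline{\dim}M=(1,2)$ and $\Sigma_2^+P_1\simeq S_1$ on the reflected quiver), one has $|\mathrm{Gr}_{(0,1)}(M)|=q+1$ while $\sigma_2(0,1)=(0,-1)$ gives an empty Grassmannian; even the total numbers of submodules, $q+4$ versus $2$, disagree. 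The reason is that $\Sigma_i^+$ does not transform submodule dimension vectors by $\sigma_i$: the dimension of $(\Sigma_i^+N)_i$ involves the cokernel of $\bigoplus_{j\to i}N_j\to N_i$, which is not determined by $\underline{\dim}N$. Correspondingly, $\Phi_i$ cannot be applied ``monomial by monomial'': each power of $X_i$ must be replaced by the quantum exchange Laurent polynomial \eqref{eq:cl_exp}, so every monomial of $X^{\widetilde{Q}}_M$ explodes into a quantum-binomial sum, and the identity $\Phi_i(X^{\widetilde{Q}}_M)=X^{\widetilde{Q}'}_{R_i^+M}$ is precisely a nontrivial resummation with massive cancellation. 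Carrying out that resummation, with the correct $q^{1/2}$-exponents and a stratification of the Grassmannians by their behaviour at the vertex $i$, is the technical heart of Rupel's proof; it cannot be declared ``routine'' once a (nonexistent) bijection is granted.

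Your fallback induction via Theorems \ref{multi-formula} and \ref{exchange2} also does not reach the statement in the generality in which it is asserted and used. The theorem is claimed for every $M$ with $X^{\widetilde{Q}}_M\in\mathcal{A}_{|k|}(\widetilde{Q})$, and in Section 5 it is applied to non-rigid objects (for instance $X_\delta=X_{R_p(1)}$ in the Kronecker case, where $R_p(1)$ has self-extensions, and more generally to elements built from regular modules), whereas your induction only propagates the identity through exchange triangles between rigid objects. Moreover, since $X_{M\oplus N}\neq X_MX_N$ for $kQ$-modules (as the paper itself emphasizes), establishing the identity on indecomposable generators does not automatically yield it for arbitrary objects. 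Finally, the ``numerical compatibility'' you defer — that the mutation of the compatible pair $(\Lambda,\widetilde{B})$ matches the change of $\widetilde{R}$, $\widetilde{R}^{tr}$ and the Euler form under $\sigma_i$ — is not a peripheral check but, together with the resummation above, is essentially the whole proof; as it stands your proposal assumes the conclusion at exactly these two points.
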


\begin{definition}[\cite{CK1}]
            Let $Q$ be an acyclic quiver with associated matrix $B$. $Q$ will be called \emph{graded}\index{graded} if
            there exists a linear form $\epsilon$ on $\mathbb{Z}^{n}$ such that $\epsilon(B \alpha_i)<0$ for any $1\leq i \leq n$
            where $\alpha_i$ still denotes the $i$-th
            vector of the canonical basis of $\mathbb{Z}^{n}$.
\end{definition}

If $Q$ is a graded quiver, then it is proved in \cite{CK1} that we
can endow the cluster algebra of $Q$ with a grading. Namely, the
results are the following:

For any Laurent polynomial $P$ in the variables $X_i$, the
 $supp(P)$ of $P$ is defined as the set of points
$\lambda=(\lambda_i,1\leq i \leq n)$ of $\mathbb{Z}^{n}$ such that
the $\lambda$-component, that is, the coefficient of $\prod_{1\leq i
\leq n} X_i^{\lambda_i}$ in $P$ is nonzero. For any $\lambda$ in
$\mathbb{Z}^{n}$, let $C_\lambda$ be the convex cone with vertex
$\lambda$ and edge vectors generated by the $B\alpha_i$ for any
$1\leq i \leq n$. Then we have the following two propositions as the
quantum versions of Proposition 5 and Proposition 7 in \cite{CK1}
respectively.
\begin{proposition}\label{prop:supportconeCK1}
            Let $Q$ be a graded acyclic quiver with no multiple arrows and
           $M=M_0 \oplus P_M[1]$ with $M_0$ is $kQ$-module and $P_M$ projective $k\widetilde{Q}$-module. Then,
            $supp(X_{M_0\oplus P_M[1]})$
            is in $C_{\lambda_M}$ with
            $\lambda_M:=(-\langle\alpha_i,\underline{dim} M_0\rangle+\langle\underline{dim} P_M, \alpha_i\rangle)
            _{1\leq i \leq n}$.  Moreover, the $\lambda_M$-component of $X_{M_0\oplus P_M[1]}$
          is some nonzero monomials in $\{|k|^{\pm\frac{1}{2}},X^{\pm 1
            }_{n+1},\cdots,X^{\pm 1
            }_{m}\}$.
\end{proposition}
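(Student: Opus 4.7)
The plan is to unfold the quantum Caldero-Chapoton formula and read off the principal-part exponent of each summand. By definition,
\[
X_{M_0 \oplus P_M[1]} = \sum_{\underline{e}} |\mathrm{Gr}_{\underline{e}} M_0|\; q^{-\frac{1}{2}\langle \underline{e}, \underline{m}_0 - \underline{e}\rangle}\; X^{\widetilde{B}\underline{e} - (\widetilde{I}-\widetilde{R})\underline{m}_0 + \underline{\dim} P_M/\mathrm{rad} P_M},
\]
where $\underline{m}_0 = \underline{\dim} M_0$ and the sum ranges over $0 \le \underline{e} \le \underline{m}_0$. First I would extract the first $n$ coordinates of each exponent vector in $\mathbb{Z}^m$. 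They equal $B\underline{e} - (I_n - R)\underline{m}_0 + (m_i)_{1 \le i \le n}$, where $P_M = \bigoplus_j m_j P_j$; the $i$-th component of $(I_n - R)\underline{m}_0$ is exactly $\langle \alpha_i, \underline{m}_0\rangle$, and projectivity of $P_M$ yields $m_i = \dim\mathrm{Hom}_{k\widetilde{Q}}(P_M, S_i) = \langle \underline{\dim} P_M, \alpha_i\rangle$. Thus the principal-part exponent of the $\underline{e}$-summand is $B\underline{e} + \lambda_M$.

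The cone containment is then immediate: since $\underline{e} \ge 0$ componentwise, $B\underline{e} = \sum_i e_i\cdot B\alpha_i$ is a nonnegative integer combination of the edge vectors of $C_{\lambda_M}$, so every contributing point lies in $C_{\lambda_M}$.

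For the moreover clause I would invoke the grading $\epsilon$. Since $\epsilon(B\alpha_i) < 0$ for every $i$, whenever $\underline{e} \ne 0$ one has $\epsilon(B\underline{e} + \lambda_M) < \epsilon(\lambda_M)$, so no such summand can contribute to the $\lambda_M$-component. Hence only the term $\underline{e} = 0$ can, and for this term $|\mathrm{Gr}_0 M_0| = 1$ while the quantum prefactor is $q^0 = 1$. The surviving contribution is a single monomial $X^{\mathbf{c}_0}$ with $\mathbf{c}_0 = -(\widetilde{I}-\widetilde{R})\underline{m}_0 + \underline{\dim} P_M/\mathrm{rad} P_M$. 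Applying the toric-frame normal-ordering $X^{\mathbf c} = q^{\frac{1}{2}\sum_{i<j} c_i c_j \lambda_{ji}} X_1^{c_1}\cdots X_m^{c_m}$ and then commuting the frozen factors $X_{n+1},\ldots,X_m$ past $X_1^{(\lambda_M)_1}\cdots X_n^{(\lambda_M)_n}$ (each swap producing a power of $q$), the $\lambda_M$-component emerges as a nonzero monomial in $\{|k|^{\pm 1/2}, X_{n+1}^{\pm 1},\ldots,X_m^{\pm 1}\}$ after the specialization $q \mapsto |k|$.

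The main obstacle is essentially just bookkeeping: keeping the $m \times n$ matrices $\widetilde{B}, \widetilde{R}, \widetilde{I}$ separate from their principal $n \times n$ blocks, and handling $\alpha_i$ consistently as a vector of $\mathbb{Z}^n$ or extended by zero to $\mathbb{Z}^m$. Once the identification $m_i = \langle \underline{\dim} P_M, \alpha_i\rangle$ is in hand, the argument is a direct quantum transcription of Caldero-Keller's proof of \cite[Proposition 5]{CK1}; the quantum prefactors contribute only $q^{1/2}$-powers that land inside the coefficient ring $\mathbb{ZP}$ without affecting the shape of the support.
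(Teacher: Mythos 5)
Your proposal is correct. The paper actually states this proposition without proof, presenting it as the quantum version of Proposition 5 of \cite{CK1}; your argument -- reading off the principal-part exponent $B\underline{e}+\lambda_M$ of each summand of the quantum Caldero--Chapoton formula (using $m_i=\langle\underline{\mathrm{dim}}\,P_M,\alpha_i\rangle$ and the identification of $(I_n-R)\underline{m}_0$ with the Euler form), deducing the cone containment from $\underline{e}\ge 0$, and using the grading $\epsilon$ to show only the $\underline{e}=0$ term contributes to the $\lambda_M$-component, which is then a single monomial in $q^{\pm 1/2}$ and the frozen variables -- is precisely the intended quantum transcription of the Caldero--Keller argument that the paper implicitly relies on.
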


\begin{proposition}\label{prop:graduationCK1}
            Let $Q$ be a graded acyclic quiver with no multiple arrows. For any $m \in \mathbb{Z}$, set
            $$F_m=\left( \bigoplus_{\epsilon(\nu) \leq m} \mathbb{ZP}\prod_{1\leq i \leq n}u_i^{\nu_i}\right) \cap \Acal_{|k|}(\widetilde{Q}),$$
            then the set $(F_m)_{m \in \mathbb{Z}}$ defines a $\mathbb{Z}$-filtration of $\Acal_{|k|}(\widetilde{Q})$.
\end{proposition}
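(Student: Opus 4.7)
The plan is to lift the linear form $\epsilon$ to a $\mathbb{Z}$-grading on the ambient quantum torus $\mathcal{T}_{|k|}$ and identify each $F_m$ with the intersection of $\Acal_{|k|}(\widetilde{Q})$ with the associated filtration. Specifically, I would extend $\epsilon$ to $\widetilde{\epsilon}\colon\mathbb{Z}^m\to\mathbb{Z}$ by declaring $\widetilde{\epsilon}(e_j)=0$ for every frozen index $n+1\leq j\leq m$. Because the multiplication in $\mathcal{T}_{|k|}$ satisfies $X^{\mathbf{c}}X^{\mathbf{d}}=|k|^{\Lambda(\mathbf{c},\mathbf{d})/2}X^{\mathbf{c}+\mathbf{d}}$ and $\widetilde{\epsilon}$ is linear, the assignment $\deg(X^{\mathbf{c}})=\widetilde{\epsilon}(\mathbf{c})$ endows $\mathcal{T}_{|k|}$ with the structure of a $\mathbb{Z}$-graded algebra, and $F_m$ is by construction the intersection of its degree-$\leq m$ part with $\Acal_{|k|}(\widetilde{Q})$.

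With this framing, the filtration axioms become essentially formal. The inclusion $F_m\subseteq F_{m+1}$ is immediate. For $\bigcup_m F_m=\Acal_{|k|}(\widetilde{Q})$, I would invoke the quantum Laurent phenomenon: every $x\in\Acal_{|k|}(\widetilde{Q})\subseteq \mathcal{T}_{|k|}$ is a finite $\mathbb{ZP}$-linear combination of monomials $\prod u_i^{\nu_i}$, so $x\in F_m$ as soon as $m\geq\max\epsilon(\nu)$ over its finite support.

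The multiplicative property $F_m\cdot F_{m'}\subseteq F_{m+m'}$ is the essential content. Writing $x=\sum_{\widetilde{\epsilon}(\nu)\leq m}a_\nu X^{\widetilde{\nu}}$ and $y=\sum_{\widetilde{\epsilon}(\nu')\leq m'}b_{\nu'}X^{\widetilde{\nu}'}$ with $a_\nu,b_{\nu'}\in\mathbb{ZP}$ and $\widetilde{\nu}$ the image of $\nu$ in $\mathbb{Z}^m$, the quantum torus product expands as a $\mathbb{Z}[|k|^{\pm 1/2}]$-linear combination of monomials $X^{\widetilde{\nu}+\widetilde{\nu}'}$, and additivity of $\widetilde{\epsilon}$ gives $\widetilde{\epsilon}(\widetilde{\nu}+\widetilde{\nu}')\leq m+m'$. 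Since $xy$ lies in $\Acal_{|k|}(\widetilde{Q})$ by closure of the subalgebra, $xy\in F_{m+m'}$ as required.

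The main obstacle, if one can call it that, is bookkeeping: one needs the direct sum $\bigoplus_{\nu}\mathbb{ZP}\prod_i u_i^{\nu_i}$ to be well-defined, and one needs multiplication by elements of $\mathbb{ZP}$ to preserve $\widetilde{\epsilon}$-degree. The first follows because $\{X^{\mathbf{c}}:\mathbf{c}\in\mathbb{Z}^m\}$ is a $\mathbb{Z}[|k|^{\pm 1/2}]$-basis of $\mathcal{T}_{|k|}$, and the second follows because the frozen generators $X_{n+1},\ldots,X_m$ lie in degree $0$, so commuting them past cluster monomials only introduces $|k|^{1/2}$-powers without shifting the degree. Modulo these checks, the proof is a direct quantization of Proposition~7 in \cite{CK1} and requires neither Theorem~\ref{multi-formula} nor Theorem~\ref{exchange2}.
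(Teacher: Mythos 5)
Your proof is correct and coincides with the paper's intended argument: the paper states this proposition without proof, presenting it as the quantum analogue of Proposition~7 of \cite{CK1}, and your direct quantization — extending $\epsilon$ to a grading of the quantum torus with the frozen variables $X_{n+1},\dots,X_m$ (and $|k|^{\pm 1/2}$) in degree $0$, identifying $F_m$ with the degree-$\leq m$ part intersected with $\Acal_{|k|}(\widetilde{Q})$, and using the quantum Laurent phenomenon of \cite{berzel} for exhaustion and the relation $X^{\mathbf c}X^{\mathbf d}=|k|^{\Lambda(\mathbf c,\mathbf d)/2}X^{\mathbf c+\mathbf d}$ for multiplicativity — is precisely the expected filling-in of that citation.
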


For any $\underline{d}\in \mathbb{Z}^{n},$ define
$\underline{d}^{+}=(d^+_i)_{1\leq i \leq n}$ such that $d^+_i=d_i$
if $d_i>0$ and $d_i^+=0$ if $d_i\leq 0$ for any $1\leq i \leq n.$
Dually, we set $\underline{d}^-=\underline{d}^+-\underline{d}.$ The
following proposition \ref{10} can be viewed as the categorification
of   \cite[Theorem 7.3]{berzel}.

\begin{proposition}\label{10}
Let $\widetilde{Q}$ be an acyclic quiver. Then the set
$\{\prod_{i=1}^{n}X^{d^+_i}_{S_i}X^{d^-_i}_{P_i[1]}\mid
(d_1,\cdots,d_n)\in \mathbb{Z}^n\}$ is a $\mathbb{ZP}$-basis of
$\Acal_{|k|}(\widetilde{Q})$.
\end{proposition}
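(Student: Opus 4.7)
The plan is to establish $\mathbb{ZP}$-linear independence and spanning of the standard monomials $M_{\mathbf{d}}:=\prod_{i=1}^n X_{S_i}^{d_i^+}X_{P_i[1]}^{d_i^-}$ separately; the statement is a quantum categorification of \cite[Theorem 7.3]{berzel}. A direct application of the quantum Caldero--Chapoton map gives $X_{P_i[1]}=X^{e_i}=X_i$ (the $i$-th initial cluster variable) and the binomial $X_{S_i}=X^{-(\widetilde{I}-\widetilde{R})\alpha_i}+X^{\widetilde{B}\alpha_i-(\widetilde{I}-\widetilde{R})\alpha_i}$ in the quantum torus $\Tcal$. For linear independence I equip $Q$ with an $\mathbb{R}$-linear grading $\epsilon$ on $\mathbb{Z}^{n}$ satisfying $\epsilon(B\alpha_i)<0$ for every $i$ (Proposition \ref{prop:graduationCK1}); the $\epsilon$-leading summand of $X_{S_i}$ is then $X^{-(\widetilde{I}-\widetilde{R})\alpha_i}$, so multiplying out $M_{\mathbf{d}}$ in $\Tcal$ yields a unique leading torus monomial $q^{\ast}X^{\psi(\mathbf{d})}$ with $\psi(\mathbf{d})=-(\widetilde{I}-\widetilde{R})\mathbf{d}^{+}+\widetilde{I}\mathbf{d}^{-}$ and coefficient a $\mathbb{ZP}$-unit. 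The principal-part projection $\bar{\psi}(\mathbf{d})=\mathbf{d}^{-}-(I-R)\mathbf{d}^{+}=R\mathbf{d}^{+}-\mathbf{d}$ is injective, since for a topological ordering of the vertices of acyclic $Q$ the matrix $R$ is strictly lower triangular, and one recovers $\mathbf{d}$ from $\bar{\psi}(\mathbf{d})$ vertex by vertex (reading off $d_i^+=\max(d_i,0)$ at each step). Distinct $M_{\mathbf{d}}$ therefore have distinct leading monomials with $\mathbb{ZP}$-unit coefficients, giving $\mathbb{ZP}$-linear independence.

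For spanning, by the main theorem of \cite{fanqin}, $\Acal_{|k|}(\widetilde{Q})$ is generated as a $\mathbb{ZP}$-algebra by the cluster variables $\{X_M: M \text{ indec.\ rigid }kQ\text{-mod}\}\cup\{X_{P_i[1]}\}_{1\le i\le n}$, so it suffices to show that (a) each $X_M$ with $M$ indecomposable rigid lies in the $\mathbb{ZP}$-span of the $M_{\mathbf{d}}$'s and (b) products of standard monomials reduce to standard form. For (a), I would induct on a well-founded complexity order on isomorphism classes of $kQ$-modules (dimension-lex order refined by the degeneration order on orbits in the representation variety) and apply Theorem \ref{hall multi} to the short exact sequence $0\to S_i\to M\to M/S_i\to 0$ for a simple submodule $S_i\hookrightarrow M$. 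The resulting identity
\[
q^{[M/S_i,S_i]^{1}}X_{S_i}X_{M/S_i}=q^{\ast}\sum_{E}\varepsilon^{E}_{M/S_i,S_i}X_E
\]
exhibits $X_M$ with a $\mathbb{ZP}$-unit coefficient and each other $E$ strictly less complex, so solving for $X_M$ and invoking the induction hypothesis puts $X_M$ in the desired span. For (b), the $q$-commutation relations in $\Tcal$ handle reordering of factors, while Theorem \ref{exchange2} (applied with $P=P_i$) expresses any mixed factor $X_{S_i}X_{P_i[1]}$ as a $\mathbb{ZP}$-combination of monomials in the $X_{P_j[1]}$'s, enabling a PBW-type reduction that eliminates simultaneous occurrences of $X_{S_i}$ and $X_{P_i[1]}$ at the same vertex.

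The main obstacle will be the choice of complexity order in step (a): since every $E$ in the Hall expansion shares the total dimension of $M$, a naive induction on $\underline{\dim}$ alone fails, and one must invoke a finer well-founded order such as the degeneration order on representation-variety orbits (following the Ringel--Hall approach of \cite{Hubery}), together with careful bookkeeping of the $q$-twists from $\Lambda$ (Corollary \ref{2}) to guarantee invertibility of the relevant Hall-number coefficients in $\mathbb{Z}[q^{\pm 1/2}]$. Once this ordering is in place, the proof closes along the same template as the classical Berenstein--Zelevinsky basis theorem \cite[Theorem 7.3]{berzel}.
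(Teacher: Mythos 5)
Your linear-independence step has a hypothesis that the proposition does not grant you: you call on a linear form $\epsilon$ with $\epsilon(B\alpha_i)<0$ for all $i$, i.e.\ you assume $Q$ is \emph{graded} in the sense of Caldero--Keller. Such a form exists if and only if $\ker B$ contains no nonzero nonnegative vector, and this fails for perfectly good acyclic quivers (for instance the acyclic orientation of affine $A_3$ with arrows $1\to2\to3$ and $1\to4\to3$ has $(1,0,1,0)\in\ker B$), whereas Proposition \ref{10} is asserted for \emph{every} acyclic $\widetilde{Q}$; this is exactly why the paper invokes gradedness only in Theorems \ref{12} and \ref{affine}, after first passing to a reflection-equivalent graded orientation. (Your leading-term scheme could be repaired by choosing a linear form on $\ZZ^m$ negative on the columns of $\widetilde{B}$, which exists because $\widetilde{B}$ has full rank, but see below for the route actually intended.)

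The spanning argument has more serious problems. First, in the expansion $q^{[M/S_i,S_i]^1}X_{S_i}X_{M/S_i}=q^{\ast}\sum_E\varepsilon^E_{M/S_i,S_i}X_E$ the other middle terms $E$ are \emph{not} strictly below $M$ in the degeneration order: every middle term degenerates to the split module $S_i\oplus M/S_i$, so the split term and the more generic extensions lie above or incomparable to $M$, and the downward induction you describe never gets started. Second, solving for $X_M$ means dividing by $\varepsilon^M_{M/S_i,S_i}$, a point count that is in general not a unit of $\mathbb{Z}[q^{\pm1/2}]$, so you cannot stay inside $\mathbb{ZP}$. Third, step (b) is not mere $q$-commutation: $X_{S_i}$ and $X_{S_j}$ do not quasi-commute when $i$ and $j$ are joined by an arrow (they lie in different clusters), so already the product of two standard monomials produces terms $X_E$ with $E$ non-rigid, i.e.\ the full difficulty you were trying to isolate in (a). The paper's proof sidesteps all of this in two lines: $\{X_{\tau P_1},\dots,X_{\tau P_n}\}$ is the initial cluster, mutation in direction $i$ replaces $X_{\tau P_i}$ by $X_{S_i}$ (this identification uses \cite[Theorem 5.4.3]{fanqin}), and then the basis statement is precisely the standard-monomial theorem \cite[Theorem 7.3]{berzel} applied to this acyclic quantum seed. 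You observed yourself that the proposition is a categorification of that theorem; the intended proof is to invoke it, not to re-derive it by Ringel--Hall induction.
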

\begin{proof}
For any $1\leq i\leq n,$ it is easy to check that the following set
is a cluster $$\{X_{\tau P_{1}},\cdots,X_{\tau P_{i-1}},
X_{S_{i}},X_{\tau P_{i+1}},\cdots, X_{\tau P_{n}}\}$$ obtained by
the mutation in direction $i$ of the cluster $$\{X_{\tau
P_{1}},\cdots,X_{\tau P_{i-1}}, X_{\tau P_{i}},X_{\tau
P_{i+1}},\cdots, X_{\tau P_{n}}\}.$$ Then the proposition
immediately follows from \cite[Theorem 7.3]{berzel} and
\cite[Theorem 5.4.3]{fanqin}.
\end{proof}

The main result is the following theorem showing the
$\mathbb{ZP}$-basis in a quantum cluster algebra of finite type. It
is the good bases in a cluster algebra of finite type in \cite{CK1}
by specializing $q$ and coefficients to $1$ and the existence of
Hall polynomials for representation direct algebras \cite{ringel:2}.

\begin{theorem}\label{12}
Let $Q$ is a simple-laced Dynkin quiver with $Q_0=\{1,2,\cdots,
n\}.$ Then the set $\mathcal{B}(Q):=\{X_{M}|M=M_0 \oplus P_M[1]\
\text{with}\ M_0\ \text{is}\   kQ\text{-module},\  P_M\ \
\text{projective}\ k\widetilde{Q}\text{-module},\
 M\ \text{rigid}$
$\text{object in}\ \mathcal C_{\widetilde{Q}}\}$ is a
$\mathbb{ZP}$-basis of $\Acal_{|k|}(\widetilde{Q})$.
\end{theorem}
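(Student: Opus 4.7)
The plan is to establish that $\mathcal{B}(Q)$ is simultaneously $\mathbb{ZP}$-spanning and $\mathbb{ZP}$-linearly independent inside $\Acal_{|k|}(\widetilde{Q})$, by transplanting the strategy of \cite{CK1} into the quantum framework built in Sections~3 and~4. The main tools are the multiplication theorems \ref{hall multi}, \ref{multi-formula} and \ref{exchange2} for spanning; the support-cone estimate of Proposition~\ref{prop:supportconeCK1} together with the filtration of Proposition~\ref{prop:graduationCK1} for linear independence; and Ringel's Hall polynomials for representation-directed algebras \cite{ringel:2}, which guarantee that the identities derived from the cluster category behave polynomially in $q$ and so survive the specialization $q^{1/2}\mapsto|k|^{1/2}$.

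For spanning, the starting point is the observation (recalled at the beginning of Section~4 from \cite{fanqin}) that $\Acal_{|k|}(\widetilde{Q})$ is generated as a $\mathbb{ZP}$-algebra by $\{X_M:M\text{ indecomposable rigid}\}\cup\{X_{\tau P_i}:1\le i\le n\}$, so it suffices to rewrite every product $X_{N_1}\cdots X_{N_r}$ of such generators as a $\mathbb{ZP}$-combination of $X_N$ with $N$ rigid. I would treat pairs of consecutive factors using the multiplication theorems: Theorem~\ref{exchange2} reduces products involving $X_{\tau P_i}$, while Theorem~\ref{multi-formula} (applied in its three special cases) handles products $X_{N_j}X_{N_{j+1}}$ for which there is a non-trivial $\mathrm{Ext}^{1}$ or $\mathrm{Hom}(\,\cdot\,,\tau\,\cdot\,)$. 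At each step the output consists of two terms $X_E$ and $X_{E'}$ indexed by middle terms of canonical triangles in $\mathcal{C}_{\widetilde{Q}}$, which are themselves rigid. Iterating decreases an inductive complexity measure (for instance a weighted count of pairs of indecomposable summands admitting an extension) and terminates after finitely many steps, since Dynkin type admits only finitely many isoclasses of indecomposables.

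For linear independence, fix a linear form $\epsilon$ making $Q$ graded and suppose $\sum_M a_M X_M=0$ were a nontrivial $\mathbb{ZP}$-linear relation among elements of $\mathcal{B}(Q)$. Let $M^{*}$ achieve the maximum of $\epsilon(\lambda_M)$ over $\{M:a_M\ne 0\}$. By Proposition~\ref{prop:supportconeCK1} the $\lambda_{M^{*}}$-coordinate of $X_{M^{*}}$ is a nonzero monomial in $|k|^{\pm 1/2}$ and the coefficients, while every other $X_M$ in the sum supports inside the cone $C_{\lambda_M}$ whose edges $\widetilde{B}\alpha_i$ satisfy $\epsilon(\widetilde{B}\alpha_i)<0$; by maximality of $\epsilon(\lambda_{M^{*}})$, the point $\lambda_{M^{*}}$ does not lie in such a cone. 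Extracting the $\lambda_{M^{*}}$-component of the relation then forces $a_{M^{*}}=0$, contradicting the choice of $M^{*}$.

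The main obstacle I anticipate is verifying that $M\mapsto\lambda_M$ is injective on isomorphism classes of rigid objects in $\mathcal{C}_{\widetilde{Q}}$, so that distinct summands in the hypothetical relation really occupy distinct apex points. This is the quantum counterpart of the denominator-vector injectivity exploited in \cite{CK1}; in Dynkin type it should follow from the identification of $\lambda_M$ with the index (or $g$-vector) of $M$ in $\mathcal{C}_{\widetilde{Q}}$, which is known to separate rigid objects. Should a direct intrinsic proof prove technical, an alternative is to use the specialization map $ev$ together with the BGP compatibility of Theorem~\ref{ref} to transport the basis property from the classical case treated in \cite{CK1} into the quantum setting, combining it with the spanning statement proved above.
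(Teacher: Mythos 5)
Your independence argument is broadly in the spirit of the paper's, but your route to spanning is different and is where the genuine gap lies. The paper never straightens products of generators with the multiplication theorems: it expands each $X_M$, $M$ rigid, against the standard monomial $\mathbb{ZP}$-basis $\{\prod_{i}X_{S_i}^{d_i^+}X_{P_i[1]}^{d_i^-}\}$ of Proposition \ref{10}, and uses Propositions \ref{prop:supportconeCK1} and \ref{prop:graduationCK1} to see that this expansion is triangular for the $\epsilon$-filtration with leading coefficient a nonzero monomial in $q^{\pm 1/2}$ and the frozen variables; inverting this unit-diagonal triangular change of basis gives spanning and linear independence simultaneously. Your straightening procedure, by contrast, has only Theorems \ref{multi-formula} and \ref{exchange2} at its disposal, and these require $\mathrm{dim}_k\mathrm{Ext}^1(M,N)=1$ (resp. $[P,M]=[M,I]=1$) together with Hom-vanishing hypotheses. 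Already in type $D_4$ there are pairs of indecomposable (hence rigid) objects with $\mathrm{dim}_k\mathrm{Ext}^1_{\mathcal{C}_{\widetilde{Q}}}(M,N)\geq 2$ (e.g.\ $N=S_c$ the simple at the central vertex and $M=\tau^{-1}E$ with $E$ the indecomposable of dimension vector having $2$ at the centre, for the subspace orientation), so your reduction step simply does not apply to such products; the paper explicitly does not prove the higher-dimensional quantum multiplication formula that would be needed, and your ``complexity measure'' is never shown to decrease. Hence spanning is not established.

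Two further points. First, in the independence part you ``fix a linear form $\epsilon$ making $Q$ graded'', but gradedness depends on the orientation and need not hold for the given $Q$; the paper handles this by passing to a reflection-equivalent graded orientation $Q'$, proving the statement for $\Acal_{|k|}(\widetilde{Q}')$, and transporting it back through $\Phi_{i_l}\cdots\Phi_{i_1}$ and Theorem \ref{ref}, whereas you invoke that machinery only as an optional fallback for a different issue. (Your worry about injectivity of $M\mapsto\lambda_M$ is sidestepped in the paper because the leading standard monomial is indexed by $\underline{\mathrm{dim}}\,M$ itself, and in finite type rigid objects are determined by their dimension vectors.) Second, your proposed alternative of transporting the basis property from the classical case of \cite{CK1} through specialization is not valid as stated: a nontrivial $\mathbb{ZP}$-relation may have all coefficients in the kernel of the specialization of $q^{1/2}$ and the coefficients to $1$, so independence does not lift, and spanning does not lift either; the specialization can only serve as a consistency check, which is exactly how the paper uses it.
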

\begin{proof}
It is obvious to see that there exists an orientation such that $Q'$
is a graded quiver where $Q'$ is reflection-equivalent to $Q$.
Assume that $\sigma_{i_{l}}\cdots \sigma_{i_1}(Q')=Q$. For any
$X_{M}\in \mathcal{B}(Q)$  with dimension vector
$\mathrm{\underline{dim}}M=\underline{m}=(m_1,\cdots,m_n)\in
\mathbb{Z}^n$, we know that $X_{M}\in \Acal_{|k|}(\widetilde{Q}')$.
Then by Proposition \ref{10} we have
$$X^{\widetilde{Q}'}_{M}=b_{\underline{m}}\prod_{i=1}^{n}(X_{S_i}^{\widetilde{Q}'})^{m^+_i}(X^{\widetilde{Q}'}_{P_i[1]})^{m^-_i}
+\sum_{\epsilon(\underline{l})<
\epsilon(\underline{m})}b_{\underline{l}}\prod_{i=1}^{n}(X_{S_i}^{\widetilde{Q}'})^{l^+_i}(X^{\widetilde{Q}'}_{P_i[1]})^{l^-_i}$$
where $\underline{l}=(l^+_i-l^-_i)_{i\in Q_0}$, $b_{\underline{m}}$
and $b_{\underline{l}}\in \mathbb{ZP}$. As $Q'$ is a graded quiver,
then by Proposition \ref{prop:supportconeCK1}, Proposition
\ref{prop:graduationCK1}, it follows that $b_{\underline{m}}$ must
be some nonzero monomial in $\{q^{\pm\frac{1}{2}},X^{\pm 1
            }_{n+1},\cdots,X^{\pm 1
            }_{m}\}$. Therefore,  $\mathcal{B}(Q)$ is a $\mathbb{ZP}$-basis of
$\Acal_{|k|}(\widetilde{Q}')$. There is a natural isomorphism:
$\Phi_{i_{l}}\cdots \Phi_{i_1}:
\mathcal{A}_{|k|}(\widetilde{Q}')\rightarrow
\mathcal{A}_{|k|}(\widetilde{Q})$. By Theorem \ref{ref}, we obtain
that
$$\Phi_{i_{l}}\cdots \Phi_{i_1}(X^{\widetilde{Q}'}_M)=X^{\widetilde{Q}}_{R^+_{i_{l}}\cdots
R^+_{i_1}(M)}.$$ Hence, $\mathcal{B}(Q)$ is a $\mathbb{ZP}$-basis of
$\Acal_{|k|}(\widetilde{Q})$.

\end{proof}
\section{$\mathbb{ZP}$-bases in   quantum cluster algebras of affine type}

\subsection{The case in the Kronecker quiver} Let $Q$ be the tame quiver of type
$\widetilde{A}^{(1)}_{1}$ as follows

\begin{center}
 \setlength{\unitlength}{0.61cm}
 \begin{picture}(5,4)
 \put(0,2){1}\put(0.4,2.2){$\bullet$}
\put(3,2.2){$\bullet$}\put(3.4,2){2} \put(0.8,2.5){\vector(3,0){2}}
\put(0.8,2.2){\vector(3,0){2}}
 \end{picture}
 \end{center}

It is well known that the  regular indecomposable modules decomposes
into a direct sum of homogeneous tubes  indexed by the projective
line $\mathbb{P}^1$. We denote the  regular indecomposable modules
in the homogeneous tube for $p\in \mathbb{P}^1$ of degree $1$ by
$R_p(n)$ where $n\in\mathbb{N}$ and
$\underline{\mathrm{dim}}R_p(n)=(n, n)$.

We consider the following  ice quiver $\widetilde{Q}$ with frozen
vertices $3$ and $4$:
$$\xymatrix{1\bullet\ar[d]\ar @<2pt>[r] \ar@<-2pt>[r]& \bullet 2\ar[d]\\
3&  4}$$ Thus we have
$$\widetilde{R}=\left(\begin{array}{cc} 0 & 0\\
2& 0\\
1& 0\\
0 & 1\end{array}\right),\ \widetilde{I}=\left(\begin{array}{cc} 1 & 0\\
0& 1\\
0& 0\\
0 & 0\end{array}\right),\ \widetilde{B}=\left(\begin{array}{cc} 0 & 2\\
-2& 0\\
-1& 0\\
0 & -1\end{array}\right).$$

An easy calculation shows that the following antisymmetric $4\times
4$ integer
matrix $$\Lambda=\left(\begin{array}{cccc} 0 & 0& 1& 0\\
0 & 0& 0& 1\\
-1 & 0& 0& -2\\
0 & -1& 2& 0\end{array}\right)$$ satisfying
\begin{align}\label{eq:simply_laced_compatible}
\Lambda(-\widetilde{B})=\widetilde{I}:=\begin{bmatrix}I_2\\0
\end{bmatrix},
\end{align}
where $I_2$ is the identity matrix of size $2\times 2$. Then we have
the following result.

\begin{lemma}\label{kro}
Let $R_{p}(1)$ be the indecomposable regular module of degree $1$ as
above. Then
$$X_{R_p(1)}=X_{S_1}X_{S_2}-q^{-\frac{3}{2}}X_1X_2X_3.$$
\end{lemma}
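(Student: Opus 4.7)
The plan is to verify the identity by direct evaluation of both sides using the quantum Caldero--Chapoton formula together with the quantum-torus product rule $X^{\mathbf c}X^{\mathbf d}=q^{\Lambda(\mathbf c,\mathbf d)/2}X^{\mathbf c+\mathbf d}$. The one qualitative input is the submodule lattice of $R_p(1)$: for every $p\in\mathbb P^1(k)$, the regular indecomposable of dimension $(1,1)$ is uniserial of length two with socle $S_1$ and top $S_2$, so its only submodules are $0$, $S_1$, and $R_p(1)$ itself. Consequently $X_{R_p(1)}$ is a sum of exactly three monomials $X^{\mathbf c}$, while each $X_{S_i}$ is a sum of two monomials (from the submodules $0$ and $S_i$).

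First, I would write out $X_{S_1}$, $X_{S_2}$, and $X_{R_p(1)}$ as explicit Laurent polynomials in the $X^{\mathbf c}$, using the matrices $\widetilde B$, $\widetilde I-\widetilde R$ given in the excerpt and the Euler form $\langle\mathbf a,\mathbf b\rangle=\mathbf a^T(I_2-R)\mathbf b$ with $R=\left(\begin{smallmatrix}0&0\\2&0\end{smallmatrix}\right)$. The only nontrivial Euler-form value that appears in the three Caldero--Chapoton sums is $\langle(1,0),(0,1)\rangle=0$, so every $q^{-\frac12\langle\underline e,\underline m-\underline e\rangle}$ factor in those three formulas is $1$. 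Next, I would expand $X_{S_1}X_{S_2}$ into its four cross-terms using the quantum-torus rule. Three of the four $\Lambda(\mathbf c,\mathbf d)$-values vanish, and the remaining one equals $-2$, producing a factor of $q^{-1}$ in front of $X^{(1,1,1,0)}$. Comparing with the expression for $X_{R_p(1)}$, one sees that three of the four monomials in $X_{S_1}X_{S_2}$ reproduce exactly the three monomials in $X_{R_p(1)}$, while the fourth is an excess term equal to $q^{-1}X^{(1,1,1,0)}$.

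It then remains to identify this excess term with $q^{-3/2}X_1X_2X_3$. Using the normalization $X^{\mathbf c}=q^{\frac12\sum_{i<j}c_ic_j\lambda_{ji}}X_1^{c_1}\cdots X_m^{c_m}$ and the given entries of $\Lambda$, a short computation yields $X_1X_2X_3=q^{1/2}X^{(1,1,1,0)}$, so $q^{-1}X^{(1,1,1,0)}=q^{-3/2}X_1X_2X_3$, which finishes the identity. There is no conceptual obstacle: the argument is entirely computational, and the only care required is accurate bookkeeping of the $q$-exponents that appear simultaneously from the Euler-form twists inside the Caldero--Chapoton sums and from the $\Lambda$-commutation of the quantum torus. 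As an alternative shortcut, one could instead apply Theorem \ref{hall multi} with $(M,N)=(S_2,S_1)$ and note that $\dim\mathrm{Ext}^1(S_2,S_1)=2$ distributes as $q-1$ equivalence classes over each of the $q+1$ middle terms $R_p(1)$ together with one split class with middle term $S_1\oplus S_2$; this produces the relation $(q^2-1)X_{R_p(1)}=q^2X_{S_1}X_{S_2}-X_{S_1\oplus S_2}$, from which the same conclusion falls out after subtracting the analogous expansion of $X_{S_1\oplus S_2}$.
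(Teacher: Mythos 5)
Your proposal is correct and takes essentially the same route as the paper: the paper's proof simply records the expansions $X_{S_1}=X^{(-1,2,1,0)}+X^{(-1,0,0,0)}$, $X_{S_2}=X^{(0,-1,0,1)}+X^{(2,-1,0,0)}$, $X_{R_p(1)}=X^{(-1,1,1,1)}+X^{(1,-1,0,0)}+X^{(-1,-1,0,1)}$ and concludes by direct calculation, which is exactly the computation you spell out (your submodule data, with $S_1$ as socle, agrees with the paper's convention $\dim\Ext^1_{k\widetilde Q}(S_i,S_j)=|\{\mathrm{arrows}\ j\to i\}|$ and with the paper's listed monomials). Your $q$-bookkeeping also checks out: the only nonzero commutation exponent is $\Lambda\bigl((-1,2,1,0),(2,-1,0,0)\bigr)=-2$, and $X_1X_2X_3=q^{1/2}X^{(1,1,1,0)}$, giving the stated $-q^{-3/2}X_1X_2X_3$ correction term.
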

\begin{proof}
By definition, we have
$$X_{S_1}=X^{(-1,2,1,0)}+X^{(-1,0,0,0)};$$
$$X_{S_2}=X^{(0,-1,0,1)}+X^{(2,-1,0,0)};$$
$$X_{R_p(1)}=X^{(-1,1,1,1)}+X^{(1,-1,0,0)}+X^{(-1,-1,0,1)}.$$
Hence the lemma follows from a direct calculation.
\end{proof}
By Lemma \ref{1}, the expression of $X_{R_p(1)}$ is independent of
the choice of  $p\in \mathbb{P}^1_k$ of degree 1. Hence, we set
$$
X_{\delta}:= X_{R_p(1)}.
$$

\begin{remark}\label{kro2}

\begin{enumerate}
  \item By Lemma \ref{kro}, we know that $X_{\delta}$ belongs to $\Acal_{|k|}(\widetilde{Q}).$
 \item By the following Theorem \ref{affine}, $\mathcal{B}(Q)$ is a
$\mathbb{ZP}-$basis in the quantum cluster algebra
$\Acal_{|k|}(\widetilde{Q}).$ Moreover, if specializing $q$ and
coefficients to $1$, $\mathcal{B}(Q)$ is exactly the generic basis
in the sense of \cite{Dup}.
\end{enumerate}
\end{remark}
Note that there is an alternative choice of $(\Lambda,
\widetilde{B})$, i.e.,
$\widetilde{Q}=Q$ and set $\Lambda=\left(\begin{array}{cc} 0 & 1\\
-1 &
0\end{array}\right)$ and $\widetilde{B}=\left(\begin{array}{cc} 0 & 2\\
-2 & 0\end{array}\right)$. Then we have $
\Lambda(-\widetilde{B})=\left(
                          \begin{array}{cc}
                            2 & 0 \\
                            0 & 2 \\
                          \end{array}
                        \right).
$ Hence, one should consider the category of $KQ$-representations
for a field K with $|K|=q^2.$ In this way, we obtain a quantum
cluster algebra of Kronecker type without coefficients. The
multiplication and the bar-invariant bases in this algebra have been
thoroughly studied in \cite{DX}. Moreover, under the specialization
$q=1$, the bases in \cite{DX} induce the canonical basis,
semicanonical basis and generic basis of the cluster algebra of the
Kronecker quiver in the sense of \cite{sherzel},\cite{calzel} and
\cite{Dup}, respectively.

\subsection{The case in  affine types}
An \emph{affine quiver}\index{affine quiver} is an acyclic quiver
whose underlying diagram in an extended Dynkin diagram. One can
refer to \cite{dlab}\cite{CB:lectures}\cite{ringel:1099} for the
theory of representations of affine quivers.   We recall some useful
background concerning representation theory of affine quivers. In
this section we always assume that $Q$ is an affine quiver with
trivial valuation. The category $rep(kQ)$ of finite-dimensional
representations can be identified with the category of mod-$kQ$ of
finite-dimensional modules over the path algebra $kQ.$ It is
well-known that indecomposable $kQ$-module contains (up to
isomorphism) three families: the component of indecomposable regular
modules $\mathcal R(Q)$, the component of the preprojective modules
$\mathcal P(Q)$ and the component of the preinjective modules
$\mathcal I(Q)$. If $P \in \mathcal P(Q)$, $I \in \mathcal I(Q)$ and
$R \in \mathcal R(Q)$, then
    $$\Hom_{kQ}(R,P) \simeq \Hom_{kQ}(I,R) \simeq \Hom_{kQ}(I,P)=0,$$
    and
    $$\Ext^1_{kQ}(P,R)\simeq \Ext^1_{kQ}(R,I)\simeq \Ext^1_{kQ}(P,I)=0.$$
    If $M$ and $N$ are two regular indecomposable modules in different tubes, then
    $$\Hom_{kQ}(M,N)=0 \textrm{ and } \Ext^1_{kQ}(M,N)=0.$$

 There are at most
$3$ non-homogeneous tubes for $Q.$ We denote these tubes by
$\mathcal{T}_1, \cdots, \mathcal{T}_t$. Let $r_i$ be the rank of
$\mathcal{T}_i$ and the regular simple modules in $\mathcal{T}_i$ be
$E^{(i)}_{1}, \cdots E^{(i)}_{r_i}$ such that $\tau
E^{(i)}_2=E^{(i)}_1, \cdots, \tau E^{(i)}_1=E^{(i)}_{r_i}$ for $i=1,
\cdots, t$. If we restrict the discussion to one tube, we will omit
the index $i$ for convenience. Given a regular simple module $E$ in
a  tube, $E[i]$ is the indecomposable regular module with
quasi-socle $E$ and quasi-length $i$ for any $i\in \mathbb{N}$.

Define the set
$$
\textbf{D}(Q)=\{\underline{d}\in \mathbb{N}^ {Q_0}\mid  \exists
\mbox{ a regular rigid module}\ R\   \mbox{and regular simple
modules}\ E_1,\cdots,E_r$$$$ \mbox{in a non-homogenerous tube with
rank}\ r \mbox{ such that }
\mathrm{\underline{dim}}((E_1\oplus\cdots\oplus E_r)^{n}\oplus
R)=\underline{d}.
$$
 Set $\textbf{E}(Q)=\{\underline{d}\in
\mathbb{Z}^{Q_0}\mid \exists M=M_0 \oplus P_M[1]\ \text{with}\ M_0\
\text{is}\  kQ\text{-module},\  P_M\ \text{projective}\
k\widetilde{Q}\text{-module},$
 $M\ \text{rigid object in}\ \mathcal C_{\widetilde{Q}}\  \text{with}\ \mathrm{\underline{dim}} M=\underline{d}\}$. By the main theorem in
\cite{DXX}, we have that $\mathbb{Z}^{Q_0}$ is the disjoint union of
$\textbf{D}(Q)$ and $\textbf{E}(Q)$. We make an assignment, i.e., a
map $$\phi: \mathbb{Z}^{Q_0}\rightarrow
\mathrm{obj}(\mathcal{C}_{\widetilde{Q}})$$ and set
$$X_{\phi(\underline{d})}:=(X_{E_1}\cdots X_{E_r})^{n}X_{R}$$ if $\underline{d}\in\textbf{D}(Q)$ and $|Q_0|>2;$

$$
X_{\phi(\underline{d})}:=X^{n}_{\delta}
$$ for some $\delta$ in a homogeneous tube of degree $1$ if $\underline{d}\in\textbf{D}(Q)$ and $Q$ is the Kronecker
quiver;
$$
X_{\phi(\underline{d})}:=X_{M}
$$ if $\underline{d}\in\textbf{E}(Q)$. It is clear that the above assignment is not
unique. For simplicity and without confusion, we omit $\phi$ in the
notation $X_{\phi(\underline{d})}$.

\begin{theorem}\label{affine}
Let $Q$ be an affine quiver with $Q_0=\{1,2,\cdots, n\}$ and fix an
assignment as above. Then the set
$$\mathcal{B}(Q):=\{X_{\underline{d}}|\underline{d}\in \mathbb{Z}^{Q_0}\}$$
is a $\mathbb{ZP}$-basis of $\Acal_{|k|}(\widetilde{Q})$.
\end{theorem}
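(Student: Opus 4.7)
The plan is to mirror the proof of Theorem \ref{12}: reduce to a graded orientation by BGP reflection, and then exhibit a triangular expansion of each $X_{\underline{d}}$ in the monomial $\mathbb{ZP}$-basis of Proposition \ref{10}. First I would pick an orientation $Q'$ reflection-equivalent to $Q$ which is graded, and verify that the canonical isomorphism $\Phi_{i_l}\cdots\Phi_{i_1}:\mathcal{A}_{|k|}(\widetilde{Q}')\to\mathcal{A}_{|k|}(\widetilde{Q})$ of Theorem \ref{ref} sends $\mathcal{B}(Q')$ bijectively onto $\mathcal{B}(Q)$. Since the extended reflection functor $R_i^+$ preserves rigidity and permutes the quasi-simples of each tube, the decomposition $\mathbb{Z}^{Q_0}=\textbf{E}(Q)\sqcup\textbf{D}(Q)$ and the assignment $\phi$ transport compatibly to $Q'$, so it suffices to work on the graded quiver $Q'$.

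On $Q'$ I would establish, for every $\underline{d}\in\mathbb{Z}^{Q_0}$, an expansion
\[
X_{\underline{d}}=b_{\underline{d}}\prod_{i=1}^n(X_{S_i})^{d_i^+}(X_{P_i[1]})^{d_i^-}+\sum_{\epsilon(\underline{l})<\epsilon(\underline{d})}b_{\underline{l}}\prod_{i=1}^n(X_{S_i})^{l_i^+}(X_{P_i[1]})^{l_i^-},
\]
where $b_{\underline{d}}$ is a nonzero monomial in $\{q^{\pm 1/2},X_{n+1}^{\pm 1},\ldots,X_m^{\pm 1}\}$; combined with Proposition \ref{10} this triangularity will immediately force $\mathcal{B}(Q')$ to be a $\mathbb{ZP}$-basis. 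For $\underline{d}\in\textbf{E}(Q)$ the argument of Theorem \ref{12} applies verbatim via Propositions \ref{prop:supportconeCK1} and \ref{prop:graduationCK1}, since $X_{\underline{d}}=X_M$ for a rigid $M$. For $\underline{d}\in\textbf{D}(Q)$ I would use Theorem \ref{hall multi} to expand the relevant products $X_{E_1}\cdots X_{E_r}$, $X_\delta^n$, and $(X_{E_1}\cdots X_{E_r})^n X_R$ in the Laurent monomial basis, observing that Proposition \ref{prop:supportconeCK1} applies individually to each factor (each being a Caldero-Chapoton image of a regular $kQ$-module), then add the support cones of the factors to locate the leading position and identify its coefficient.

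The main obstacle is controlling the leading coefficient $b_{\underline{d}}$ when $\underline{d}\in\textbf{D}(Q)$. A priori the iterated application of Theorem \ref{hall multi} produces coefficients which are polynomials in $q^{\pm 1/2}$ assembled from counts $\varepsilon_{MN}^E$, and it requires careful inductive bookkeeping inside each tube to check that after all Green-formula cancellations a single monomial survives at the leading $\underline{d}$-position with a unit coefficient. The template is Lemma \ref{kro} for the Kronecker tube, where $X_\delta$ is a difference of two pure monomials, which incidentally establishes tube-parameter independence. For a non-homogeneous tube of rank $r$, the analogous expansion and the independence of $X_{\underline{d}}$ from the choices hidden in $\phi$ should follow from the tube-invariance of the quantum Caldero-Chapoton map (its value on a regular module depends only on the dimension vector and the quasi-length data) by induction on quasi-length, using Theorem \ref{hall multi} inside the tube. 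Consistency of the resulting basis under the specialization $q\to 1$ with the affine integral basis of \cite{DXX} then provides an independent check of the computation.
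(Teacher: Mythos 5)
Your proposal follows essentially the same route as the paper: reduce to a graded orientation $Q'$ via Theorem \ref{ref}, show membership of the elements $X_{\underline{d}}$ (Lemma \ref{kro}/Remark \ref{kro2} for Kronecker, Theorem \ref{hall multi} for rigid regular $R$), and then obtain the triangular expansion against the monomial basis of Proposition \ref{10} with unit leading coefficient via Propositions \ref{prop:supportconeCK1} and \ref{prop:graduationCK1}, exactly as the paper does. The ``main obstacle'' you flag is not actually one: since $X_{\underline{d}}$ for $\underline{d}\in\textbf{D}(Q)$ is \emph{defined} as a product of Caldero--Chapoton images of honest modules, the leading $\underline{d}$-component is just the product of the unit leading monomials of the factors (support cones add), so no Green-formula cancellation bookkeeping inside the tubes is needed.
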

\begin{proof}
By \cite{Dup}, there exists an orientation such that $Q'$ is  a
graded quiver where $Q'$ is reflection-equivalent to $Q$.

When $Q'$ is a Kronecker quiver, by Remark \ref{kro2}, we know that
 $X^{n}_{\delta}(n\in\mathbb{N})$  is in $\Acal_{|k|}(\widetilde{Q}')$. If $Q'$ is not
 a Kronecker quiver, we consider the
non-homogeneous tubes. By Theorem \ref{hall multi}, $X_{R}$ is in
$\Acal_{|k|}(\widetilde{Q}')$. Thus $(X_{E_1}\cdots
X_{E_r})^{n}X_{R}$ is in $\Acal_{|k|}(\widetilde{Q}')$.
 Note that for any $\underline{m}=(m_1,\cdots,m_n)\in \mathbb{Z}^n$, $X_{\underline{m}}\in\mathcal{B}(Q')$.  Then by  Proposition \ref{10} we have
$$X^{\widetilde{Q}'}_{\underline{m}}=b_{\underline{m}}\prod_{i=1}^{n}(X_{S_i}^{\widetilde{Q}'})^{m^+_i}(X^{\widetilde{Q}'}_{P_i[1]})^{m^-_i}
+\sum_{\epsilon(\underline{l})<
\epsilon(\underline{m})}b_{\underline{l}}\prod_{i=1}^{n}(X_{S_i}^{\widetilde{Q}'})^{l^+_i}(X^{\widetilde{Q}'}_{P_i[1]})^{l^-_i}$$
where  $b_{\underline{m}},b_{\underline{l}}\in \mathbb{ZP}$. As $Q'$
is a graded quiver, then by Proposition \ref{prop:supportconeCK1},
Proposition \ref{prop:graduationCK1}, it follows that
$b_{\underline{d}}$ must be some nonzero monomial in
$\{q^{\pm\frac{1}{2}},X^{\pm 1
            }_{n+1},\cdots,X^{\pm 1
            }_{m}\}$. Therefore,  $\mathcal{B}(Q')$ is a $\mathbb{ZP}$-basis of
$\Acal_{|k|}(\widetilde{Q}')$. By Theorem \ref{ref}, we obtain that
$\mathcal{B}(Q)$ is a $\mathbb{ZP}$-basis of
$\Acal_{|k|}(\widetilde{Q})$.
\end{proof}

By \cite[Proposition 5]{CR}, the quiver Grassmannians
$\mathrm{Gr}_{\underline{e}}(M)$ of a $kQ$-module $M$ are some
polynomials in $\mathbb{Z}[q].$ Then by specializing $q$ and
coefficients to $1$, the bases in Theorem \ref{affine} induces the
integral bases in affine cluster algebras (\cite{DXX}).
\begin{remark}
Theorem \ref{affine} does not provide the quantum version for
generic bases of affine type in \cite{Dup}. In order to achieve it,
one need to prove a quantum analogue of the difference property
\cite[Definition 3.24]{Dup}.
\end{remark}
\section*{Acknowledgements}
 The authors would like
 to  thank Professor Jie Xiao and Dr. Fan Qin for many helpful discussions and suggestions.


\end{document}